\theoremstyle{plain}
\newtheorem{theo}{Theorem}[section]
\newtheorem{prop}[theo]{Proposition}
\theoremstyle{definition}
\newtheorem{example}[theo]{Example}
\newtheorem{definition}[theo]{Definition}
\theoremstyle{plain}
\newtheorem{lemma}[theo]{Lemma}
\newtheorem{theorem}[theo]{Theorem}
\newtheorem{corollary}[theo]{Corollary}
\newtheorem{proposition}[theo]{Proposition}
\theoremstyle{definition}
\newtheorem{remark}[theo]{Remark}
\newcommand{\beq}{\begin{equation}}
\newcommand{\eeq}{\end{equation}}
\newcommand{\C}{\mathbb{C}}
\newcommand{\R}{\mathbb{R}}
\renewcommand{\H}{\mathbb{H}}
\newcommand{\Z}{\mathbb{Z}}
\newcommand\GL{\mathrm{GL}}
\newcommand{\detH}{{\det}_{\mathbb H}}
\renewcommand{\square}{\kern1pt\vbox
{\hrule height 0.6pt\hbox{\vrule width 0.6pt\hskip 3pt
\vbox{\vskip 6pt}\hskip 3pt\vrule width 0.6pt}\hrule height0.6pt}\kern1pt}
\newcommand{\be}{\begin{equation}}
\newcommand{\ee}{\end{equation}}
\def\<#1,#2>{\langle\,#1,\,#2\,\rangle}
\newcommand{\arr}{\begin{array}{rlll}}
\newcommand{\ea}{\end{array}}
\newcommand{\bea}{\begin{eqnarray}}
\newcommand{\eea}{\end{eqnarray}}
\newcommand{\bean}{\begin{eqnarray*}}
\newcommand{\eean}{\end{eqnarray*}}
\def\sideremark#1{\ifvmode\leavevmode\fi\vadjust{
\vbox to0pt{\hbox to 0pt{\hskip\hsize\hskip1em
\vbox{\hsize3cm\tiny\raggedright\pretolerance10000
\noindent #1\hfill}\hss}\vbox to8pt{\vfil}\vss}}}
\newcounter{ssig}
\newcounter{ttig}
\begin{document}
\author{Graziano Gentili}
\address{Dipartimento di Matematica e Informatica ``U. Dini'', Universit\`a di Firenze, 50134 Firenze, Italy}
\email{graziano.gentili@unifi.it}

\author{Anna Gori}
\address{Dipartimento di Matematica, Universit\`a di Milano, Via Saldini 50, 20133 Milano, Italy}
\email{anna.gori@unimi.it}

\author{Giulia Sarfatti}
\address{Dipartimento di Matematica e Informatica ``U. Dini'', Universit\`a di Firenze, 50134 Firenze, Italy} 
\email{giulia.sarfatti@unifi.it}

\keywords{Affine quaternionic manifolds, fundamental groups of compact affine quaternionic surfaces}
\subjclass[2010]{30G35, 53C15}



\title{On compact affine quaternionic curves and surfaces}

\thanks{\footnotesize The authors have been supported by G.N.S.A.G.A. of INdAM - Rome (Italy), Progetto di Ricerca  INdAM ``Teoria delle funzioni ipercomplesse e applicazioni", SIR (Research Project ``Analytic aspects in complex and hypercomplex geometry''), Finanziamento Premiale FOE 2014 (Progetto: ``Splines for accUrate NumeRics: adaptIve models for Simulation Environments-SUNRISE''). The third author has also been supported , by MIUR of the Italian Government (Research Projects: PRIN ``Real and complex manifolds: geometry, topology and harmonic analysis'').}

\maketitle

\begin{abstract}
This paper is devoted to the study of affine quaternionic manifolds and to a possible classification of all compact affine quaternionic curves and surfaces. It is established that on an affine quaternionic manifold there is one and only one affine quaternionic structure. A direct result, based on the celebrated Kodaira Theorem that studies compact complex manifolds in complex dimension $2$, states that the only compact affine quaternionic curves are the quaternionic tori and the primary Hopf surface  $S^3\times S^1$. As for compact affine quaternionic surfaces, we restrict to the complete ones: the study of their fundamental groups, together with the inspection of all nilpotent hypercomplex simply connected $8$-dimensional Lie Groups, identifies a path towards their classification.
\end{abstract}

\section{Introduction}
The definition of slice regularity for functions of one and several quaternionic variables (see, e.g., \cite{libroGSS, Perotti}) has led to a renewed interest for a direct approach to the study of quaternionic manifolds. Quaternionic manifolds, as spaces locally modelled on $\mathbb H^n$ in a slice regular sense, are presented in \cite{GGS} with the name of \emph{quaternionic regular manifolds}, and the closely related class of quaternionic toric manifolds is studied in \cite{GGS1}. In this setting, the class of \emph{affine quaternionic manifolds} - i.e. those manifolds with an atlas whose transition functions are quaternionic affine - 
reveals to be of natural interest, both because of the well established interest for  affine complex manifolds, and for the reason that most of the natural quaternionic manifolds already studied are indeed affine quaternionic manifolds. In his seminal paper, \cite{Sommese}, Sommese introduces the class of \emph{quaternionic  manifolds} as those differentiable $4n$-dimensional  manifolds whose transition functions preserve the standard  right quaternionic structure of $\R^{4n}$. This definition forces the transition maps, and hence the manifolds, to be quaternionic affine. 

The main purpose of this paper is to find a path towards a classification of all compact affine quaternionic curves and surfaces.  

The well celebrated Kodaira Theorem, \cite{Ko}, allows Vitter \cite{Vitter}, Matsushima \cite{Matsu} and Inoue, Kobayashi, Ochiai \cite{IKO} to classify all compact complex manifolds in complex dimension $2$ admitting a complex affine  structure. Since affine quaternionic curves are affine complex surfaces, we can use this classification to identify all $1$-dimensional compact affine quaternionic manifolds.

In quaternionic dimension 2, the lack of a classification of affine compact complex manifolds of dimension 4 advises us to change point of view in order to classify compact affine quaternionic surfaces. To identify the subclass of compact, geodesically complete (or complete for short), affine quaternionic surfaces we adopt in fact the approach used in [7], based on the study of their fundamental groups, and prove the following result. 

\begin{theorem} \label{intro} If the subgroup $\Gamma\subseteq {\it Aff}(2,\H)$
acts freely and properly discontinuously  on $\H^2$, and  $\H^2/\Gamma$ is compact,  then $\Gamma$ contains a unipotent (see Definition \ref{unipotent}) normal subgroup $\Gamma_0$ of finite index such that $\Gamma/\Gamma_0$ is isomorphic to  a finite subgroup of $\mathbb{S}^3$. 
\end{theorem}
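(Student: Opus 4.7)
The plan is to combine a Bieberbach--Auslander style analysis of discrete affine groups with the special algebraic structure of $\GL(2,\H)$. My starting point would be the linearization homomorphism $L\colon \Gamma\to\GL(2,\H)$ sending an affine map $x\mapsto Ax+b$ to its linear part $A$, together with the standard embedding $\Aff(2,\H)\hookrightarrow\GL(9,\R)$ that realises each affine map as a block upper triangular matrix. Since $\Gamma$ acts freely, properly discontinuously and cocompactly on $\H^{2}\cong\R^{8}$, the Fried--Goldman style viewpoint adopted in reference [7] forces $\Gamma$ to be finitely generated and virtually polycyclic, so that the Zariski closure $\overline{\Gamma}$ inside $\Aff(2,\H)$ is a solvable real algebraic group. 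Its identity component $\overline{\Gamma}^{0}$ then admits a Levi decomposition into a compact semisimple factor $S$ and a unipotent radical $U$.

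Applying the multiplicative Jordan decomposition $A=A_{s}A_{u}=A_{u}A_{s}$ to each linear part, a standard proper-discontinuity argument shows that the family of semisimple parts $\{A_{s}:\gamma\in\Gamma\}$ lies in a relatively compact subgroup of $\GL(2,\H)$: otherwise one could manufacture sequences of elements contradicting the discreteness of $\Gamma$ on sufficiently large compact sets of $\H^{2}$. This relatively compact subgroup must, in turn, normalise (and essentially centralise) the collection of unipotent parts produced by the same decomposition. The structural question is therefore to pin down which compact subgroups of $\GL(2,\H)$ can play this role when the unipotent radical $U$ is forced, by the cocompactness assumption, to span enough of $\H^{2}$ together with the translation kernel of $L$.

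The quaternionic input enters at this point: $\mathbb{S}^{3}$ embeds in $\GL(2,\H)$ as the group of left scalar multiplications by unit quaternions, and a centraliser/conjugacy computation shows that, once the unipotent radical is fixed, the compact Levi factor admits (up to finite index) a conjugate inside this copy of $\mathbb{S}^{3}$. I would then define
$$\Gamma_{0}\=L^{-1}(U)\cap\Gamma,$$
a normal subgroup of finite index whose elements have unipotent linear parts. Unipotency in the sense of Definition \ref{unipotent} is then automatic from the $\GL(9,\R)$ matrix realisation, because the pure translations sit inside the unipotent radical of $\Aff(2,\H)$ by construction. Finally, $\Gamma/\Gamma_{0}$ injects into $S\cap L(\Gamma)$, which by the previous step is a finite subgroup of $\mathbb{S}^{3}$.

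The main obstacle will be the centraliser/conjugacy analysis that singles out $\mathbb{S}^{3}$ among compact subgroups of $\GL(2,\H)$, excluding a priori larger candidates such as $\Sp(2)$ or $\Sp(1)\times\Sp(1)$. It is here that cocompactness must be used quantitatively: the unipotent radical has to be rich enough that its centraliser inside a maximal compact subgroup of $\GL(2,\H)$ collapses to $\mathbb{S}^{3}$. This is precisely where the right-quaternionic structure preserved by the transition functions (left scalar multiplication commuting with right $\H$-linear maps) is genuinely exploited, and it is what makes the finite quotient land in $\mathbb{S}^{3}$ rather than in a larger compact Lie group.
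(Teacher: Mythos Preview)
Your approach is genuinely different from the paper's and, as you yourself acknowledge in the final paragraph, leaves the decisive step unproven. The paper does not use Zariski closures, Jordan decomposition, or Levi theory at all. Instead it proceeds by an explicit matrix reduction: a sequence of elementary lemmas shows that any $\Gamma$ acting freely on $\H^2$ is conjugate in $\Aff(2,\H)$ to a subgroup of
\[
G_2=\left\{\begin{pmatrix}1&b&r\\0&d&s\\0&0&1\end{pmatrix}: d\neq 0\right\}.
\]
Once this is done, the map $\varphi\colon\Gamma\to\H^\times$ sending each element to its $(2,2)$ entry $d$ is a group homomorphism, and a direct (but lengthy) orbit computation shows that for every non-central generator this $d$ is a root of unity, hence lies in $\mathbb{S}^3$. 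The kernel $\Gamma_0=\ker\varphi$ is then visibly unipotent (upper triangular with $1$'s on the diagonal), and finiteness of $\Gamma/\Gamma_0$ follows immediately from the classical Bieberbach theorem, since the pure translations already have finite index in $\Gamma$ and lie in $\Gamma_0$.

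So the copy of $\mathbb{S}^3$ in the paper is \emph{not} the group of left scalar multiplications on $\H^2$; it is the $(2,2)$ corner of the triangularised holonomy, and it emerges only after the reduction to $G_2$. Your proposal to locate $\mathbb{S}^3$ via a centraliser computation inside a maximal compact of $\GL(2,\H)$ is therefore aiming at the wrong target, and---as you concede---you have not actually carried it out. There is also a technical slip: if $\overline{\Gamma}^{\,0}$ is solvable, its reductive Levi factor is a torus, not a compact \emph{semisimple} group, so the decomposition you invoke needs to be reformulated (and the passage from ``virtually polycyclic'' to ``solvable Zariski closure'' only yields virtual solvability in any case).

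The elementary route also buys something concrete that your outline does not: the explicit homomorphism to $\mathbb{S}^3$ and the upper-triangular form of $\Gamma_0$ are exactly what is used afterwards to match $\Gamma_0$ with a lattice in one of the Dotti--Fino hypercomplex nilpotent Lie groups. Your abstract route, even if the centraliser analysis were completed, would still owe you this identification.
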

\noindent A \emph{nilmanifold} is defined as a compact coset space of the form $N/K$, where $N$ is a connected, simply connected nilpotent Lie group and $K$ is a discrete subgroup ($K$ is the fundamental group of the nilmanifold). A theorem of Malcev, \cite{Mal}, states that a given abstract group is the fundamental group of a suitable nilmanifold if and only if it is a finitely generated nilpotent group with no elements of finite order. Using this result we obtain further properties of the subgroup $\Gamma_0$ appearing in Theorem \ref{intro}:

\begin{corollary} 
In the hypotheses of Theorem \ref{intro}, the unipotent subgroup $\Gamma_0$ of $\Gamma$ turns out to be isomorphic to a discrete subgroup $i(\Gamma_0)$ of a suitable nilpotent, hypercomplex, connected, simply connected, $8$-dimensional Lie group $N$ such that $N/i(\Gamma_0)$ is compact. Moreover $N/i(\Gamma_0)$ is affine quaternionic, and the same holds for $N$.
\end{corollary}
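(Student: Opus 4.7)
\begin{proofwithnoend}
The plan is to deduce the corollary from Malcev's theorem together with a simply transitive action of a suitable Lie group on $\H^2$. First I would extract from Theorem \ref{intro} the relevant properties of $\Gamma_0$: being unipotent, it is nilpotent and (since non-identity unipotent elements in characteristic zero have infinite order) torsion-free. Because $[\Gamma:\Gamma_0]<\infty$, the quotient $\H^2/\Gamma_0$ is a finite cover of the compact manifold $\H^2/\Gamma$, hence a compact aspherical $8$-manifold with fundamental group $\Gamma_0$; in particular $\Gamma_0$ is finitely generated. Malcev's theorem then produces a connected, simply connected, nilpotent Lie group $N$ together with an embedding $i\colon\Gamma_0\hookrightarrow N$ as a cocompact lattice. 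The dimension of $N$ equals the Hirsch length of $\Gamma_0$, which coincides with the dimension of any closed aspherical manifold having $\Gamma_0$ as fundamental group; reading this off from $\H^2/\Gamma_0$ gives $\dim N=8$.

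Next I would lift the inclusion $\Gamma_0\subseteq\Aff(2,\H)$ to all of $N$. The universal property of the Malcev completion (equivalently, the identification of $N$ with the real algebraic hull of $\Gamma_0$) extends $\Gamma_0\hookrightarrow\Aff(2,\H)$ uniquely to an injective Lie group homomorphism $\rho\colon N\to\Aff(2,\H)$, so that $\rho(N)$ is a connected, simply connected, $8$-dimensional unipotent affine subgroup containing $\Gamma_0$ as a cocompact lattice. A standard argument on compact complete affine manifolds with nilpotent fundamental group (in the spirit of Fried--Goldman--Hirsch and Auslander) then shows that $\rho(N)$ acts simply transitively on $\H^2$ by affine transformations, so the orbit map at $0\in\H^2$ identifies $N$ diffeomorphically with $\H^2$, $\Gamma_0$-equivariantly with respect to left translation by $i(\Gamma_0)$ on $N$ and the original action on $\H^2$.

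Through this identification I would pull the standard hypercomplex triple $(J_1,J_2,J_3)$ on $\H^2$ back to $N$, producing three left-invariant complex structures satisfying the quaternionic relations; this equips $N$ at once with a hypercomplex structure and with an affine quaternionic atlas, showing that $N$ is an affine quaternionic Lie group. Since left translations preserve these left-invariant data, the atlas descends to the quotient $N/i(\Gamma_0)$, which is therefore affine quaternionic and which coincides with $\H^2/\Gamma_0$ as an affine quaternionic manifold via the induced diffeomorphism. The main obstacle I anticipate is precisely the simple transitivity of the action of $\rho(N)$ on $\H^2$: establishing that $\rho$ is injective and fixed-point free requires the structural theory of algebraic hulls of unipotent groups and of complete affine nilmanifolds, suitably adapted to the affine quaternionic framework.
\end{proofwithnoend}
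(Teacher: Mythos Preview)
Your argument is correct and runs parallel to the paper's, but you unpack what the paper leaves as black-box citations. The paper invokes Malcev's theorem exactly as you do to produce $N$ and the embedding $i$, and obtains $\dim N=8$ from compactness of $\H^2/\Gamma_0$. It then appeals directly to the Fried--Goldman--Hirsch theorem (a compact complete affine manifold with nilpotent fundamental group is an affine nilmanifold) and to Malcev's rigidity (nilmanifolds with isomorphic fundamental groups are isomorphic) to identify $N/i(\Gamma_0)$ with $\H^2/\Gamma_0$; the affine quaternionic structure is then transported across this identification, and the hypercomplex structure on $N$ follows by lifting to the universal cover. Your route instead extends $\Gamma_0\hookrightarrow\Aff(2,\H)$ to a homomorphism $\rho\colon N\to\Aff(2,\H)$ via the Malcev completion and argues that $\rho(N)$ acts simply transitively on $\H^2$, which is essentially the mechanism behind the Fried--Goldman--Hirsch result in this situation. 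This buys you an explicit $\Gamma_0$-equivariant identification $N\cong\H^2$ and a transparent reason why the pulled-back hypercomplex triple on $N$ is left-invariant (namely, because $\rho(N)$ acts by quaternionic affine maps). The paper's version is shorter since it cites the structural theorems rather than reproving them; yours is more self-contained, but, as you correctly flag, the simple transitivity of $\rho(N)$ is precisely the nontrivial input borrowed from the theory of complete affine nilmanifolds.
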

\noindent The corollary asserts in particular that any geodesically complete, compact, affine quaternionic surface is finitely covered by a suitable nilmanifold whose fundamental group is isomorphic to $\Gamma_0$.
This fact - together with the classification of all nilpotent hypercomplex simply connected $8$-dimensional Lie Groups given by Dotti and Fino, \cite{DF} - indicates a path towards the classification of compact, complete, affine quaternionic surfaces. As an application, in the last Section we are able to classify all affine quaternionic surfaces arising in the case of a real Heisenberg group.


\section{affine quaternionic manifolds} 
\noindent In this setting  the Dieudonn\'e determinant $\det_{\mathbb{H}}$ plays a similar role as the usual one. To each quaternionic matrix we can associate a complex matrix via the algebra homomorphism
	\[\psi: \mathcal{M}(n,\H)\to\mathcal{M}(2n,\C)\]
	defined by
	\[\psi(A+Bj)=\begin{pmatrix}
	A&-\overline{B}\\
	B&\overline{A}
	\end{pmatrix},\]
and it turns out that  $(\det_{\mathbb{H}}(M))^2=\det(\psi(M))\geq 0$,
where the right hand term is the usual determinant of $\psi(M)$, see \cite{A}. 
Hence, the group of quaternionic $n\times n$ invertible matrices $GL(n, \H)$ can be introduced in the usual fashion via the Dieudonn\'e determinant. 

For $Q=\,^t(q_1,q_2,\ldots,q_n)\in \H^n$,  we can define the group of all quaternionic affine transformations 
$$
{\it Aff}(n,\H)=\{Q \mapsto AQ+B : A\in GL(n,\H), B=\,^t(b_1, b_2, \ldots , b_n)\in \H^n \}
$$ 
which is included in the class of (right) slice regular functions, \cite{Perotti}. In complete analogy with what Kobayashi does in the complex case, we give the following: 

\begin{definition}
A differentiable manifold $M$ of $4n$ real dimensions has  a \emph{quaternionic affine structure} if it admits a differentiable atlas whose transition functions are restrictions of quaternionic affine functions of ${\it Aff}(n, \H)$.     
\end{definition}

\noindent In particular, differentiable manifolds endowed with a  quaternionic affine structure are quaternionic regular \cite{GGS}, \cite{GGS1}. The following result allows to describe the entire class of quaternionic affine manifolds. 

\begin{lemma}Let $M$ be a differentiable manifold of dimension $4n$. Then $M$ admits an affine  quaternionic structure if, and only if, there is an immersion $\psi : \widetilde M \to \H^n$ of the universal covering $\widetilde M$ of $M$ such that, for every covering transformation $\gamma$ we have $\psi\circ\gamma = X_\gamma \circ \psi$ for some affine transformation $X_\gamma$ of $\H^n$.
\end{lemma}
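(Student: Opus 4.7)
The plan is to handle the two implications of this biconditional separately, using the standard developing map construction familiar from real and complex affine geometry.

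For the easy direction, suppose the immersion $\psi : \widetilde M \to \H^n$ with the stated equivariance property is given. Since $\psi$ is an immersion between manifolds of the same dimension, it is a local diffeomorphism, so about each $\tilde x \in \widetilde M$ I can choose an open $\widetilde U$ on which $\psi|_{\widetilde U}$ is a diffeomorphism onto its image; this gives $\widetilde M$ an atlas whose transition functions are identities (hence trivially affine). To descend to $M$, I use the covering map $\pi : \widetilde M \to M$: shrinking $\widetilde U$ so that $\pi|_{\widetilde U}$ is a diffeomorphism onto $U \subset M$, the chart $(U, \psi \circ (\pi|_{\widetilde U})^{-1})$ is well defined. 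If two such charts $(U_\alpha, \f_\alpha)$ and $(U_\beta, \f_\beta)$ overlap in $M$, then over the overlap their lifts to $\widetilde M$ differ by some covering transformation $\g$, and the hypothesis $\psi\circ\g = X_\g\circ\psi$ forces the transition $\f_\beta\circ\f_\alpha^{-1}$ to coincide with the restriction of $X_\g \in \mathit{Aff}(n,\H)$. Thus $M$ inherits an affine quaternionic atlas.

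For the converse, assume $M$ carries an affine quaternionic atlas $\{(U_\alpha, \f_\alpha)\}$, and lift it to an affine quaternionic atlas $\{(\widetilde U_\a, \widetilde \f_\a)\}$ on $\widetilde M$ (each $\widetilde U_\a$ a sheet of $\pi^{-1}(U_\a)$). I fix a base point $\tilde x_0 \in \widetilde U_{\a_0}$ and set the initial developing chart to be $\widetilde \f_{\a_0}$. I then define $\psi$ by analytic continuation: for $\tilde x \in \widetilde M$, choose a path $\sigma : [0,1] \to \widetilde M$ from $\tilde x_0$ to $\tilde x$, a subdivision $0 = t_0 < t_1 < \cdots < t_k = 1$ and charts $\widetilde\f_{\a_0}, \widetilde\f_{\a_1},\ldots,\widetilde\f_{\a_k}$ with $\sigma([t_{i-1},t_i]) \subset \widetilde U_{\a_i}$ (possible by compactness of $\sigma([0,1])$ and the Lebesgue number lemma). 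Since each overlap transition is an element of $\mathit{Aff}(n,\H)$, I can uniquely replace $\widetilde\f_{\a_i}$ by $A_i \circ \widetilde\f_{\a_i}$ for affine maps $A_i \in \mathit{Aff}(n,\H)$ so that, after modification, all charts agree on successive overlaps; I then set $\psi(\tilde x)$ to be the value at $\tilde x$ of the modified last chart. The resulting $\psi$ is a local diffeomorphism, hence an immersion, wherever it is defined.

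It remains to verify independence of the choices and equivariance. Path-independence is the crux and, as usual, follows from the simple connectedness of $\widetilde M$: any two paths from $\tilde x_0$ to $\tilde x$ are homotopic, and across a sufficiently fine grid on a homotopy the affine adjustments on adjacent chart strings coincide on overlapping pieces; so the final chart at $\tilde x$ is the same. This shows $\psi$ is globally well defined on $\widetilde M$. Finally, given a covering transformation $\g$, the map $\psi \circ \g$ is itself a developing map based at $\g(\tilde x_0)$; by the very construction, two developing maps on the simply connected $\widetilde M$ that are obtained from the same collection of affine charts differ by a unique element $X_\g \in \mathit{Aff}(n,\H)$ (compare the two at one point and use that both agree on a chart up to an affine transformation). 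Hence $\psi \circ \g = X_\g \circ \psi$, as required. The main technical obstacle is this path-independence / homotopy argument, which is where simple connectedness and the affine nature of transitions must be used decisively.
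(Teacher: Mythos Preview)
Your argument is the standard developing-map construction for $(G,X)$-structures, specialised to $G=\mathit{Aff}(n,\H)$ and $X=\H^n$, and it is correct. Both directions are handled properly: the ``easy'' direction builds charts on $M$ from equivariant local sections of the immersion, and the ``hard'' direction analytically continues a fixed initial chart along paths in $\widetilde M$, using that transitions lie in the affine group so that the continuation is unambiguous on overlaps and well defined by simple connectedness.

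As for comparison with the paper: there is nothing to compare against. The paper states the lemma without proof and simply remarks that the holomorphic analogue can be found in \cite{KH83}. Your write-up supplies exactly the argument one finds in that reference (adapted verbatim from the complex affine case to the quaternionic affine case, which requires no new idea since only the group $\mathit{Aff}(n,\H)$ and its action on $\H^n$ enter). So you have filled in what the authors left as a citation; no alternative route is at issue.
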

\noindent The holomorphic analogue of the previous lemma, can be found, e.g., in \cite{KH83}. We recall that the quaternionic manifolds studied by Sommese, \cite{Sommese}, all admit a quaternionic affine structure. Many significant examples can be found in his paper.

A large class of affine quaternionic manifolds can be constructed by means of a subgroup $\Gamma \subset {\it Aff}(n, \H)$ which acts freely and properly discontinuously on $\H^n$. Indeed the quotient space
\[
M=\H^n\slash \Gamma
\]
admits an atlas whose transition functions are the slice regular functions belonging to $\Gamma \subset {\it Aff}(n,\H)$, and hence has  a quaternionic affine structure.  The latter class of manifolds consists of all geodesically complete quaternionic affine manifolds, see, e.g., \cite{Goldman}.

The relation between  affine complex structures and  flat connections in the complex setting has been  deeply investigated  during the past years.  In particular in the complex setting a theorem of Matsushima  \cite{Matsu} states that  there is a one-to-one correspondence between  affine  structures and  affine holomorphic connections  which are torsion-free and flat. 
Vitter \cite{Vitter}, Inoue, Kobayashi e Ochiai \cite{IKO} gave a classification of all manifolds admitting such connections in complex dimension $1$ and $2$.
\\
A similar  correspondence holds also in the quaternionic setting. But the quaternionic structures are much more rigid. A manifold is said to admit a $GL(n,\H)$-structure if it can be endowed with two anticommunting almost complex structures, see \cite[page 48]{Salamon}. On such manifolds,  also called   {\em almost quaternionic} by Sommese in \cite{Sommese}, it is possible to define  a connection, the Obata connection, which is torsion free, and it is the only one with this property. Moreover the Obata connection turns out to be flat if and only if the  $GL(n,\H)$-structure is integrable (if and only if $M$ is quaternionic in the sense of Sommese). 

For an almost quaternionic manifold, having an integrable $GL(n,\H)$-structure 
%
is equivalent to the nullity of three tensors which, in the quaternionic setting, play the role of the Nijenhuis tensor,  \cite{Obata}.
Summarizing

\begin{proposition} A manifold $M$ has an integrable $GL(n,\H)$-structure if and only if  it is affine quaternionic and equivalently if and only if the torsion-free Obata connection of $M$ is flat.
\end{proposition}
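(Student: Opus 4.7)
The plan is to establish the three-way equivalence by first recalling that the equivalence between integrability of the $GL(n,\H)$-structure and flatness of the Obata connection is precisely the content of Obata's theorem mentioned just above the statement (vanishing of the three quaternionic Nijenhuis-type tensors is equivalent to the parallelism of $I,J$ being compatible with a flat torsion-free connection, which is the Obata connection). It therefore suffices to prove the single equivalence: $M$ is affine quaternionic if and only if its $GL(n,\H)$-structure is integrable.

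For the forward implication, I would start from an affine quaternionic atlas $\{(U_\alpha,\phi_\alpha)\}$ with $\phi_\beta\circ\phi_\alpha^{-1}\in {\it Aff}(n,\H)$ and pull back the two standard almost complex structures on $\H^n$ (left multiplication by $\bi$ and $\bj$) to obtain local tensors $I_\alpha,J_\alpha$ on $U_\alpha$. The key observation is that any map $Q\mapsto AQ+B$ with $A\in GL(n,\H)$ has differential given by right $\H$-linear action on $\H^n$, which commutes with left multiplication by $\bi$ and $\bj$; hence the pulled-back pair $(I_\alpha,J_\alpha)$ is independent of the chart and defines a global almost quaternionic structure on $M$. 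Integrability is then immediate, since the structure is locally isomorphic to the standard one on $\H^n$ for which the three obstruction tensors vanish identically.

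For the reverse implication, assume the $GL(n,\H)$-structure is integrable, so the Obata connection $\nabla$ is torsion-free and flat. Flatness together with vanishing torsion yields an atlas of \emph{flat affine charts} in which the Christoffel symbols of $\nabla$ are zero; transitions between such charts are a priori real affine transformations of $\R^{4n}$. In any such chart the two almost complex structures $I,J$ are $\nabla$-parallel, hence represented by constant real matrices. By composing once with a fixed element of $GL(4n,\R)$ one can normalise these matrices to coincide with the standard $\bi,\bj$ acting on $\H^n\simeq\R^{4n}$; since the real linear transitions between any two such normalised charts must commute with the standard $I$ and $J$, their linear parts necessarily lie in $GL(n,\H)$. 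Thus the transitions belong to ${\it Aff}(n,\H)$ and $M$ is affine quaternionic.

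The step I expect to be the main obstacle is precisely this normalisation in the reverse direction: one has to guarantee that simultaneous flat affine coordinates exist in which both $I$ and $J$ take their standard form, and moreover that this choice can be made consistently on overlapping charts up to a residual $GL(n,\H)$-ambiguity. This is where the assumption that $(I,J)$ defines a $GL(n,\H)$-structure (rather than two unrelated almost complex structures) is essential, since it is exactly what guarantees a real linear change of basis bringing the constant pair to standard form, and that the residual gauge freedom is $GL(n,\H)\ltimes\H^n={\it Aff}(n,\H)$, giving the desired affine quaternionic atlas.
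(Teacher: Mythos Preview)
Your argument is correct. Note, however, that the paper does not actually give a proof of this proposition: it is stated as a summary (``Summarizing'') of the preceding paragraph, which in turn simply cites the literature. The paper's implicit chain of equivalences is: integrability of the $GL(n,\H)$-structure is equivalent to $M$ being \emph{quaternionic in the sense of Sommese} (by Obata's result on the vanishing of the three Nijenhuis-type tensors and the flatness of the Obata connection), and Sommese's quaternionic manifolds are, by the remark made already in the Introduction, exactly the affine quaternionic ones (since transition functions preserving the right quaternionic structure of $\R^{4n}$ are forced to be affine).

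Your route is genuinely different in that it is self-contained rather than an appeal to Sommese and Obata: you pull back the standard $I,J$ through affine charts for the forward direction, and for the converse you use flatness and zero torsion to get real affine coordinates, observe that $I,J$ become constant matrices, and then normalise. What your approach buys is an explicit construction of the affine quaternionic atlas; what the paper's approach buys is brevity. The ``main obstacle'' you flag---consistent simultaneous normalisation of $I$ and $J$ across overlaps---is exactly the content of Sommese's observation that the structure group of an integrable $GL(n,\H)$-structure reduces to $GL(n,\H)$ acting affinely, so your concern is legitimate but already handled in the cited literature.
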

\noindent Moreover 

\begin{remark} \label{hypercomplex} An affine quaternionic manifold is {\em hypercomplex}, since the integrability of the $GL(n,\H)$-structure implies that it can be endowed with two anti-commuting complex structures. 
\end{remark}
\noindent Thanks to the one-to-one correspondence between affine structures and flat torsion free holomorphic connections in the complex setting, and the uniqueness of the Obata connection on an affine quaternionic manifold,  we obtain that

\begin{corollary} On an affine quaternionic manifold there is one and only one affine quaternionic structure.
\end{corollary}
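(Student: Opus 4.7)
The plan is to reduce the uniqueness of the affine quaternionic structure to the canonicity of the Obata connection, using the dictionary established in the Proposition above: an affine quaternionic structure is the same datum as an integrable $GL(n,\H)$-structure together with a flat torsion-free Obata connection.

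First I would extract a canonical pair $(Q^{\mathcal A},\nabla^{\mathcal A})$ from any affine quaternionic atlas $\mathcal A$ on $M$. In each chart $\phi:U\to\H^n$ of $\mathcal A$ one pulls back the standard right hypercomplex structure and the standard flat connection of $\H^n$; these local data glue to a globally defined integrable $GL(n,\H)$-structure $Q^{\mathcal A}$ and a torsion-free flat connection $\nabla^{\mathcal A}$ on $M$, since the transition maps lie in $\operatorname{Aff}(n,\H)$ and preserve both structures. Being torsion-free and compatible with $Q^{\mathcal A}$, the connection $\nabla^{\mathcal A}$ must coincide with the Obata connection of $Q^{\mathcal A}$ by the uniqueness of the latter recalled above.

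Next I would invert this construction. Given an integrable $GL(n,\H)$-structure $Q$ whose Obata connection $\nabla$ is flat, I would build a canonical affine quaternionic atlas $\mathcal A(Q,\nabla)$ as follows: around any $p\in M$, flatness and $Q$-compatibility of $\nabla$ produce a local $Q$-adapted, $\nabla$-parallel frame of $TM$; integrating its dual coframe yields a diffeomorphism $\phi:U\to\H^n$ that simultaneously intertwines $\nabla|_U$ with the standard flat connection of $\H^n$ and $Q|_U$ with its standard hypercomplex structure. Any two such charts differ by a diffeomorphism between open subsets of $\H^n$ that preserves both the standard flat connection (forcing it to be the restriction of a real affine map) and the standard $GL(n,\H)$-structure (forcing its linear part to be right $\H$-linear), hence by an element of $\operatorname{Aff}(n,\H)$.

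Combining the two steps, any affine quaternionic atlas $\mathcal A$ on $M$ is compatible with the canonical atlas $\mathcal A(Q^{\mathcal A},\nabla^{\mathcal A})$ reconstructed from the data it induces, and this canonical atlas depends only on the hypercomplex structure of $M$ and on its (uniquely determined) flat Obata connection. Hence any two affine quaternionic atlases on $M$ share the same pair $(Q,\nabla)$ and therefore define the same maximal affine quaternionic structure. The step I expect to be the most delicate is precisely the ``affine rigidity'' used in the reconstruction: one must check that a local diffeomorphism of $\H^n$ preserving the standard flat connection and the right multiplications by $\mathbf i$ and $\mathbf j$ lies in $\operatorname{Aff}(n,\H)$ and not merely in the real affine group of $\R^{4n}$.
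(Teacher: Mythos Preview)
Your proposal is correct and follows essentially the same strategy as the paper: the corollary is deduced from the uniqueness of the Obata connection together with the dictionary between affine quaternionic structures and flat torsion-free connections compatible with the $GL(n,\H)$-structure. The paper's argument is the single sentence preceding the corollary, which invokes Matsushima's one-to-one correspondence (in the complex setting) between affine structures and flat torsion-free holomorphic connections, and then observes that in the quaternionic case this connection is forced to be the Obata connection; you instead unpack that correspondence directly in the quaternionic setting by building the maximal atlas $\mathcal A(Q,\nabla)$ and checking the affine rigidity step, which is a harmless elaboration of the same idea rather than a different route.
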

\noindent In the complex setting the situation is quite different. Indeed
a fixed affine compact complex manifold may have a number of distinct affine structures which all induce the given complex structure, that is, it may have affine structures which are complex analytically but not affinely equivalent. As an example of this phenomenon, consider a complex $1$-dimensional torus $T = {\C}/{\Lambda}$. The usual affine coordinate on
$T$ is the coordinate $z$ of the universal cover of $T$,  defined locally on $T$. But there are other distinct affine structures on $T$, in fact, for all $a\in \C$, $a\neq 0$, there is an affine structure on $T$ whose coordinate is
$$\frac{1}{a}(e^{az}-1)=\sum_k^{\infty} \frac{a^{k-1}z^k}{k!}$$
where $z$ is the usual affine coordinate mentioned above.

\section{Towards a classification of affine quaternionic manifolds in low dimension}

In order to classify all the affine quaternionic manifolds in low dimensions, one can  try to argue as in the complex case. If $M$ is an affine quaternionic manifold of quaternionic dimension $1$, i.e., an \emph{affine quaternionic curve},  it is also a complex affine surface.  
In complex dimension $2$ the Kodaira Theorem gives a list of compact complex manifolds on which one can study the affine structures or, equivalently, the affine holomorphic flat connections.  Indeed Vitter, \cite{Vitter}, goes through the seven classes identified by Kodaira, and lists explicitly all possible affine complex structures on the following classes of affine complex surfaces: tori, affine Hopf surfaces, quotients of Abelian varieties (by cyclic groups of order $2,3,4$ or $6$), fiber bundles of $1$-dimensional tori over a $1$-dimensional torus, and quotients of these bundles.

Among the complex linear transformations of $\C^2$ those that are also quaternionic linear transformations  of $\H$ are represented, as we have seen, by non zero matrices of the form
\[
\begin{pmatrix}
	a&-\overline{b}\\
	b&\overline{a}
	\end{pmatrix}
\]
with $a, b \in \C$. Examining Vitter's classification, it is not difficult to see that there are no other quaternionic curves, with the exception of the quaternionic tori that have  already been studied in \cite{BG} and the primary Hopf surface $S^3\times S^1$.

In quaternionic dimension $2$, some examples of affine quaternionic manifolds are given by slice affine quaternionic Hopf surfaces, \cite{AB},  and by some tori that one can construct by adapting the strategy  in  \cite{Vitter} to complex dimension $4$. 


Since we cannot refer to a classification of affine, compact, complex manifolds of dimension $4$, we restrict to the class of complete, affine quaternionic manifolds in dimension $2$, i.e, those of the form $\H^2\slash \Gamma$ with $\Gamma\subset {\it Aff}(2, \H)$ acting freely and properly discontinuously on $\H^2$. In this setting we adopt the approach used in \cite{FS}, based on the study of the fundamental groups of affine compact complex surfaces.
Our aim, indeed, is to find necessary conditions on a discrete subgroup  $\Gamma$ of the group of quaternionic  affine transformations ${\it Aff}(2,\mathbb{H})$ so that its action on $\mathbb{H}^2$ is free and properly discontinuous. In what follows we identify ${\it Aff}(2,\mathbb{H})$  with the group of invertible $3 \times 3$ matrices with quaternionic entries of the form
$$\begin{pmatrix}
a&b&r\\
c&d&s\\
0&0&1\\
\end{pmatrix}$$

\noindent In this section we study  properties of a subgroup $\Gamma$ of the group ${\it Aff}(2,\mathbb{H})$, acting {\em freely} on $\mathbb{H}^2$.
The action of $A\in\Gamma$ on $\mathbb{H}^2$ on the left maps $(x,y)$ in $(x',y')$ where
$$\begin{cases}x'=ax+by+r\\
y'=cx+dy+s
\end{cases}$$
With the usual notation, we denote with $h(A)$ the matrix $\begin{pmatrix}
a&b\\
c&d\\
\end{pmatrix}$ in $GL(2,\mathbb{H})$, called the {\em holonomy part} of $A$.
\\

We first recall a few definitions  and well known facts about eigenvalues and eigenvectors in the quaternionic setting. We refer to \cite{deLeo},\cite{leib} for an exhaustive treatment of quaternionic linear algebra.
In the study of spectral theory in  the quaternionic setting one has to define what is an eigenvalue for a matrix $A$, indeed, once chosen the left action of the matrix, one can state the ``right eigenvalue problem'' and the ``left eigenvalue problem'' according to the position of the eigenvalue. We will focus on the right eigenvalue problem.
\begin{definition} Let $A$ be a $n\times n$-quaternionic matrix. Then $\lambda\in\mathbb{H}$ is a {\em right eigenvalue} for $A$ if and only if  there exists a nonzero $v\in \mathbb{H}^n$ such that
$$Av=v\lambda.$$ In this case $v$ is called an {\em eigenvector} of $A$.
\end{definition}
\rem If $\lambda \in \H$ is an eigenvalue of a quaternionic matrix $A$, then  all the elements in the $2$-sphere $S_\lambda=\{u^{-1}\lambda u : 0\neq u \in \H\}$ of all conjugates of $\lambda$ turn out to be eigenvalues of $A$: if $Av=v\lambda$, then $A(vu)=(vu)u^{-1}\lambda u$ for any invertible $u\in \mathbb{H}$.
\rem \label{multeigen}If $v$ is an eigenvector of a quaternionic matrix $A$, with eigenvalue $\lambda$, then  $v\mu$ with $\mu\in \mathbb{H}, \mu\neq 0$, is  an eigenvector with respect to the eigenvalue $\mu^{-1}\lambda \mu$ in the sphere $S_\lambda$.
\\

 	\begin{prop}
Let $M\in \mathcal{M}(n,\H)$ be a quaternionic matrix. Then $\lambda\in\H$ is a right eigenvalue of $M$ if and only if there exists a complex $\tilde\lambda\in S_\lambda$ such that
\[\detH(M-\tilde\lambda I_n)=0.\]
	\end{prop}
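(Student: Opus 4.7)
The plan is to translate the quaternionic right-eigenvalue problem for $M$ into an ordinary complex linear algebra problem for the $2n\times 2n$ complex matrix $\psi(M)$, exploiting the identity $(\det_\H N)^2=\det\psi(N)$ that tells us $\det_\H N=0$ if and only if $\psi(N)$ is singular.

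For the direction $(\Rightarrow)$, I would start from an eigenvector $v\neq 0$ with $Mv=v\lambda$ and use that the conjugacy class $S_\lambda$ always meets $\C$: if $\lambda=a+b\iota$ with $\iota$ a unit imaginary quaternion, then any $\mu\in\H^*$ conjugating $\iota$ to $i$ sends $\lambda$ to $\tilde\lambda:=a+bi\in\C\cap S_\lambda$. By Remark \ref{multeigen}, the vector $w:=v\mu$ is nonzero and satisfies $Mw=w\tilde\lambda$, i.e.\ $\tilde\lambda$ is itself a right eigenvalue of $M$. I then decompose $M=A+Bj$ with $A,B\in\mathcal M(n,\C)$ and $w=w_1+w_2j$ with $w_1,w_2\in\C^n$, and expand $Mw=w\tilde\lambda$ using the commutation rule $jc=\bar c\,j$ for $c\in\C$. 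This yields two coupled complex equations in $w_1,w_2$, which after taking appropriate complex conjugates assemble into a single homogeneous $\C$-linear system on $\C^{2n}$ whose coefficient matrix is exactly $\psi(M-\tilde\lambda I_n)$, acting on the nonzero vector built from $w_1,\bar w_2$. Hence $\psi(M-\tilde\lambda I_n)$ is singular and $\det_\H(M-\tilde\lambda I_n)=0$.

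For the converse, if $\det_\H(M-\tilde\lambda I_n)=0$ with $\tilde\lambda\in S_\lambda\cap\C$, then $\psi(M-\tilde\lambda I_n)$ is singular and produces a nontrivial kernel vector $(z_1,z_2)\in\C^{2n}$. Reading backwards the translation from the first part, I recover a nonzero $w\in\H^n$ with $Mw=w\tilde\lambda$, so $\tilde\lambda$ is a right eigenvalue of $M$. By Remark \ref{multeigen}, every element of $S_\lambda$ is then a right eigenvalue, and in particular so is $\lambda$.

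The main technical obstacle is the accurate bookkeeping of complex conjugations produced by the commutation rule $jc=\bar c\,j$: to match the two scalar equations coming from $Mw=w\tilde\lambda$ with the precise off-diagonal block pattern of $\psi(M-\tilde\lambda I_n)$, one has to conjugate exactly the ``right half'' of the unknowns, and this is the step where the two directions of the equivalence tie together cleanly through the identity $(\det_\H N)^2=\det\psi(N)$.
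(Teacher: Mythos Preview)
Your proof is correct and follows essentially the same route as the paper: reduce the quaternionic right-eigenvalue problem to a complex linear algebra statement about $\psi(M)$ and invoke the identity $(\detH N)^2=\det\psi(N)$. The paper's argument is terser---it simply asserts that a complex $\tilde\lambda\in S_\lambda$ is an eigenvalue of $\psi(M)$ and then runs the determinant chain $0=\det(\psi(M)-\tilde\lambda I_{2n})=\det\psi(M-\tilde\lambda I_n)=(\detH(M-\tilde\lambda I_n))^2$---whereas you unpack that assertion via the decomposition $M=A+Bj$, $w=w_1+w_2 j$ and also spell out the converse direction explicitly; but the underlying mechanism is the same.
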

\begin{proof}
The quaternion $\lambda$ is a right eigenvalue of $M$ if and only if every element in $S_\lambda$ is. Let $\tilde \lambda \in S_\lambda$ be a complex eigenvalue of $M$; then $\tilde \lambda$ is an eigenvalue of $\psi(M)$, and hence $$0=\det (\psi(M)-\tilde \lambda I_{2n})=\det (\psi(M-\tilde \lambda I_{n}))=(\detH (M-\tilde \lambda I_n))^2.$$

\end{proof}	

\noindent We point out that right eigenvalues are shared by similar matrices: if $Av=v\lambda$, then $M^{-1}AM(M^{-1}v)=(M^{-1}v)\lambda$ for any invertible quaternionic matrix $M$. The same is not true when considering left eigenvalues. In addition, a quaternionic matrix is diagonalisable if and only if its complex representation is diagonalisable. 

\begin{lemma}
Let the subgroup $\Gamma \subseteq {\it Aff}(2,\mathbb{H})$ act freely on $\mathbb{H}^2$. Then each element of $h(\Gamma)$ has $1$ as an eigenvalue.\end{lemma}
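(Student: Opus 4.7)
The plan is to argue by contradiction via the fixed point equation. An element $A\in\Gamma$ with holonomy part $h(A)=\begin{pmatrix} a & b \\ c & d\end{pmatrix}$ and translation part $\begin{pmatrix} r \\ s\end{pmatrix}$ fixes $(x,y)\in\H^2$ precisely when
\[
(h(A)-I_2)\begin{pmatrix} x \\ y\end{pmatrix} \;=\; -\begin{pmatrix} r \\ s\end{pmatrix}.
\]
So I would first reduce the lemma to the assertion: for every $A\in\Gamma$, the matrix $h(A)-I_2$ fails to be invertible, i.e.\ $\det_{\mathbb{H}}(h(A)-I_2)=0$.

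For the trivial element $A=I_3$ there is nothing to prove, since $h(I_3)=I_2$ visibly has $1$ as a right eigenvalue. So I may assume $A\neq I_3$. If $h(A)-I_2$ were in $GL(2,\H)$, the fixed point equation above would have a unique solution in $\H^2$, producing a fixed point of $A$ and contradicting the freeness of the $\Gamma$-action. Therefore $h(A)-I_2 \notin GL(2,\H)$, which by the definition of $GL(n,\H)$ via the Dieudonn\'e determinant forces $\det_{\mathbb{H}}(h(A)-I_2)=0$.

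To conclude that $1$ is a right eigenvalue of $h(A)$, I invoke the Proposition characterising right eigenvalues: $\lambda\in\H$ is a right eigenvalue of a quaternionic matrix $M$ iff some $\tilde\lambda\in S_\lambda$ satisfies $\det_{\mathbb{H}}(M-\tilde\lambda I)=0$. Here $\lambda=1$ is real, so the conjugation sphere $S_1=\{u^{-1}\cdot 1\cdot u:0\ne u\in\H\}=\{1\}$ is a single point, and the condition reduces exactly to $\det_{\mathbb{H}}(h(A)-I_2)=0$, which we just established.

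There is no real obstacle: the argument is a direct transcription of the classical complex/real fact that a free affine action forces the holonomy to fix a direction, and the only subtle point is making sure that, since $1$ is real and $S_1=\{1\}$, the Dieudonn\'e determinant vanishing statement is genuinely equivalent to $1$ being a right eigenvalue (rather than merely some conjugate of $1$, which would be vacuous). Once that observation is in place, the contradiction with free action closes the proof.
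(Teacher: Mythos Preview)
Your argument is correct and follows essentially the same route as the paper: both reduce to the fixed point equation $(h(A)-I_2)\begin{pmatrix}x\\y\end{pmatrix}=-\begin{pmatrix}r\\s\end{pmatrix}$ and observe that if $1$ were not a right eigenvalue of $h(A)$ then $h(A)-I_2$ would be invertible, yielding a fixed point and contradicting freeness. Your version is slightly more explicit in separating out the identity element and in invoking the Dieudonn\'e determinant together with the observation $S_1=\{1\}$, but the substance is the same.
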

\begin{proof} Let $A\in\Gamma$. The point $(x,y)\in \mathbb{H}^2$ is fixed by 
	\[A=\begin{pmatrix}
	a&b&r\\
	c&d&s\\
	0&0&1\\
	\end{pmatrix}\] if and only if
$$\begin{cases}(a-1)x+by=-r\\
cx+(d-1)y=-s
\end{cases}$$ 
If $1$ is not an eigenvalue of $h(A)$, then $(A-I) \in GL(2, \H)$ and hence the linear system has a solution; thus the action is not  free.
%
\end{proof}

\noindent Let us now define two groups  of quaternionic matrices,
\[
G_1=\left\{\begin{pmatrix}
a&b&r\\
0&1&s\\
0&0&1\\
\end{pmatrix}: a,b,r,s, \in \mathbb H,  a\neq 0\right\}
\]
 and
\[
G_2=\left\{\begin{pmatrix}
1&b&r\\
0&d&s\\
0&0&1\\
\end{pmatrix} :
 b,r,s,d \in \mathbb H, d\neq 0 \right\},
\] 
which play a key role in the study of subgroups of ${\it Aff}(2,\mathbb{H})$ acting freely on $\mathbb{H}^2$.
 \begin{proposition}Let the subgroup $\Gamma\subseteq {\it Aff}(2,\mathbb{H})$ act freely on $\mathbb{H}^2$. Then $\Gamma$ is conjugate in ${\it Aff}(2,\mathbb{H})$ to a subgroup of $G_1$ or $G_2.$
\end{proposition}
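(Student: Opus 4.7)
My plan is to combine the previous lemma, which ensures that every $h(A)$ admits $1$ as a right eigenvalue, with a dichotomy based on the one-dimensional subspaces attached to $h(A)-I$. For every $A\in\Gamma$ with $h(A)\neq I$, the non-invertibility and non-vanishing of $h(A)-I$ pin its right rank to exactly $1$, so that both $\ell_A:=\ker(h(A)-I)$ and $\mathrm{im}_A:=\mathrm{im}(h(A)-I)$ are one-dimensional right $\mathbb{H}$-subspaces of $\mathbb{H}^2$. The goal is to produce, after conjugation, a common line playing one of these two roles, which places $\Gamma$ in $G_2$ or in $G_1$ respectively.

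\textbf{Case 1:} the line $\ell_A$ is the same, say $\ell$, for every $A$ with $h(A)\neq I$. Then $\ell$ is pointwise fixed by the whole holonomy $h(\Gamma)$ (translations $h(A)=I$ posing no issue), and conjugating $\Gamma$ by the element of $\Aff(2,\mathbb{H})$ that maps $\ell$ to $\mathbb{H}e_1$ turns each $h(A)$ into $\begin{pmatrix} 1 & * \\ 0 & * \end{pmatrix}$, so $\Gamma\subseteq G_2$.

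\textbf{Case 2:} there exist $A_0,B_0\in\Gamma$ with $h(A_0),h(B_0)\neq I$ and $\ell_{A_0}\neq\ell_{B_0}$. I would pick a right $\mathbb{H}$-basis $\{v_{A_0},v_{B_0}\}$ of $\mathbb{H}^2$ adapted to these two lines; relative to it
\[
h(A_0)=\begin{pmatrix} 1 & \alpha \\ 0 & \beta \end{pmatrix}, \qquad h(B_0)=\begin{pmatrix} \gamma & 0 \\ \delta & 1 \end{pmatrix},
\]
and would then apply the previous lemma to $A_0B_0\in\Gamma$. Computing $h(A_0B_0)-I$ explicitly and requiring it to be singular, after a short case split according to which of $\alpha,\beta-1,\gamma-1,\delta$ vanish, yields the right-$\mathbb{H}$-proportionality of the column vectors $(\alpha,\beta-1)^T$ and $(\gamma-1,\delta)^T$, the non-degenerate sub-case producing the single scalar identity $\gamma-1=\alpha(\beta-1)^{-1}\delta$. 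Since these vectors span $\mathrm{im}_{A_0}$ and $\mathrm{im}_{B_0}$, this gives $\mathrm{im}_{A_0}=\mathrm{im}_{B_0}=:L$. For any further $C\in\Gamma$ with $h(C)\neq I$, the line $\ell_C$ must differ from at least one of $\ell_{A_0},\ell_{B_0}$, so reapplying the argument to the pair $(C,A_0)$ or $(C,B_0)$ gives $\mathrm{im}_C=L$. Conjugating so that $L=\mathbb{H}e_1$ now forces $h(A)-I$ to have vanishing second row for every $A$, i.e.\ $h(A)=\begin{pmatrix} * & * \\ 0 & 1 \end{pmatrix}$, and $\Gamma\subseteq G_1$.

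The main obstacle I expect is the quaternionic computation at the heart of Case 2. Since $\mathbb{H}$ is non-commutative I cannot rely on a determinantal shortcut as in the complex setting, and the degenerate sub-configurations in which one of $\alpha,\beta-1,\gamma-1,\delta$ vanishes must be handled individually, using the constraints $h(A_0),h(B_0)\neq I$ and their invertibility in $GL(2,\mathbb{H})$, in order not to invert zero when solving the eigenvector equation $(h(A_0B_0)-I)v=0$.
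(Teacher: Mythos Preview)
Your approach is correct and genuinely different from the paper's. The paper splits according to the \emph{spectrum} of $h(\Gamma)$: either some $h(A)$ has an eigenvalue $\lambda\neq 1$, in which case they diagonalise that element and, via explicit eigenvector systems for $h(B)$ and $h(A)h(B)$, force every $h(B)$ into one of two triangular shapes (then arguing that the two shapes cannot coexist because their product would lose the eigenvalue $1$); or every $h(A)$ has only the eigenvalue $1$, in which case a Jordan normal form plus the condition on $h(LC)$ finishes. Your dichotomy is instead geometric, based on the rank-one operators $h(A)-I$: either all kernels $\ell_A$ agree (giving $G_2$ immediately), or two differ and a short computation forces all images to agree (giving $G_1$). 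Your route is more structural and avoids the paper's lengthy case analysis; the paper's route is more explicit but requires tracking several sub-cases and a separate incompatibility argument. Incidentally, your Case~2 step can be done coordinate-free, bypassing the degenerate sub-cases you anticipate: if $h(A_0)h(B_0)v=v$ with $v\neq 0$, put $w=h(B_0)v$; then $v-w=(h(A_0)-I)w\in\mathrm{im}_{A_0}$ and $w-v=(h(B_0)-I)v\in\mathrm{im}_{B_0}$, so $v-w\in\mathrm{im}_{A_0}\cap\mathrm{im}_{B_0}$. If $\mathrm{im}_{A_0}\neq\mathrm{im}_{B_0}$ this gives $v=w$, hence $v\in\ell_{A_0}\cap\ell_{B_0}=\{0\}$, a contradiction.
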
 
\begin{proof}
Suppose first that $\Gamma$ contains an element $A$ such that $h(A)$ has an eigenvalue $\lambda\neq 1$. Then, we can diagonalise $h(A)$  via a matrix $P$ in $\GL(2,\mathbb{H})$. Suppose $$B\in\begin{pmatrix}
P&0\\
0&1&\\
\end{pmatrix}\Gamma\begin{pmatrix}
P&0\\
0&1&\\
\end{pmatrix}^{-1}$$
Write $h(B)=\begin{pmatrix}
a&b\\
c&d\\
\end{pmatrix},$ then $Ph(A)P^{-1}h(B)=\begin{pmatrix}
\lambda & 0\\
0&1\\
\end{pmatrix}h(B)=
\begin{pmatrix}
\lambda a&\lambda b\\
c&d\\
\end{pmatrix}.$
By the previous lemma, both $h(B)$ and $Ph(A)P^{-1}h(B)$ have $1$ as an eigenvalue, so there exist $(x,y)$ and $(z,w) \in\mathbb H^2$ such that
$$\begin{cases}ax+ b y=x\\
cx+dy=y
\end{cases}
 \quad \text{and} \quad \begin{cases}\lambda az+\lambda b w=z\\
cz+dw=w
\end{cases}.$$ 
Suppose first that $y\neq 0$ and $w\neq 0$. Hence, up to a rescaling of the eigenvector (note that, in general, thanks to Remark \ref{multeigen}, the corresponding eigenvalue changes, remaining in the same sphere; in the present case the real eigenvalue does not change), we can suppose that $y=w$. In this case, subtracting the second equations of the systems, we get $c(x-z)=0$ which implies either $c=0$ or $x=z$.
\begin{itemize}
\item  If $c=0$ then  $d=1$ (since $y\neq 0$);
\item  If $x=z$ we get $\lambda=1$ (a contradiction) or $x=z=0$.  If $x=z=0$ then $b=0$ (and hence again $d=1$ since $y\neq 0$).
\end{itemize}

\noindent Suppose now that $x\neq0$ and $z\neq 0$; then again we can assume that $x=z$, and with straightforward computations we get $d(y-w)=y-w$; thus $d=1$ or $y=w$. 
\begin{itemize}
\item If $d=1$ then $c=0$ (since $x\neq 0$);
\item if $y=w\neq 0$ we get $\lambda=1$ (a contradiction) or $y=w=0$. If $y=w=0$ then $c=0$ and $a=1$.
\end{itemize}
If  now $y=0$ (and necessarily $x\neq 0$) and $z=0$ (and necessarily $w\neq 0$), we get $c=0$ and $a=1$. 
If instead  $x=0$ (and necessarily $y \neq 0$) and $w=0$ (and necessarily $z\neq 0$, we get $b=0$ and $d=1$.  

\noindent So the possibilities for $h(B)$ are: \noindent if $b=0$,
$$\begin{pmatrix}
a& 0\\
c&1\\
\end{pmatrix};
$$
or, if $c=0$
$$\begin{pmatrix}
a& b\\
0&1\\
\end{pmatrix};
$$
Note that we cannot have both kinds of $h(B)$ occurring, for if both $\begin{pmatrix}
a& b\\
0&1\\
\end{pmatrix}, \begin{pmatrix}
a'& 0\\
c'&1\\
\end{pmatrix}$ where in $Ph(\Gamma)P^{-1}$ with $b\neq 0$ and $c'\neq 0$ also their product   
$\begin{pmatrix}
aa'+bc'& b\\
c'&1\\
\end{pmatrix}$ would belong to it, but it is easy to prove that this matrix  does not have $1$ as eigenvalue.
Hence we have that 
\[
\begin{pmatrix}
P&0\\
0&1&\\
\end{pmatrix}\Gamma\begin{pmatrix}
P&0\\
0&1&\\
\end{pmatrix}^{-1}
\]is contained in $G_1$ or in the group of all quaternionic matrices of the form $ \begin{pmatrix}
a& 0&r\\
c&1&s\\
0&0&1\\
\end{pmatrix};$ the latter is conjugate  to $G_2$ via an element of type 
$ \begin{pmatrix}
0& 1&0\\
1&0&0\\
0&0&1\\
\end{pmatrix}$
and we are done.\\

Now suppose every element of $h(\Gamma)$ has both eigenvalues $1$. If $h(\Gamma)$ is the identity, we are done, otherwise some conjugate of $\Gamma$ contains an element of the form $ L=\begin{pmatrix}
1& 1&u\\
0&1&v\\
0&0&1\\
\end{pmatrix}.$
Let $ C=\begin{pmatrix}
a& b&r\\
c&d&s\\
0&0&1\\
\end{pmatrix}$
be an arbitrary element of this conjugate of $\Gamma$.
Then $h(C)=\begin{pmatrix}
a&b\\
c&d\\
\end{pmatrix}$ and $h(LC)=\begin{pmatrix}
a+c&b+d\\
c&d\\
\end{pmatrix}$ have $1$ as eigenvalue with molteplicity $2$. A direct computation implies again that $c=0$ and $a=d=1$.
\end{proof}
In the complex case, $G_1$ and $G_2$ turn out to be solvable; we point out that this is not the case in the quaternionic setting due to the non commutativity of $\mathbb{H}$.
\begin{lemma} \label{abel1}If the subrgroup $\Gamma\subseteq {\it Aff}(2,\mathbb{H})$ acts freely on $\H^2$ and $h(\Gamma)$ is abelian then $\Gamma$ is conjugate in ${\it Aff}(2,\H)$ to a subgroup of the group of all matrices of the form 
$ \begin{pmatrix}
1& 0&r\\
0&d&s\\
0&0&1\\
\end{pmatrix}$ with $d\neq 0$ or  to a subgroup of the group of all matrices of the form $\begin{pmatrix}
1& b&r\\
0&1&s\\
0&0&1\\
\end{pmatrix}$
\end{lemma}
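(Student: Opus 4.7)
The plan is to refine the conclusion of the previous proposition using the abelian hypothesis on $h(\Gamma)$. By that proposition, after conjugation in ${\it Aff}(2,\mathbb{H})$ we may assume $\Gamma\subseteq G_1$ or $\Gamma\subseteq G_2$; the two cases are entirely parallel, so I would focus on $\Gamma\subseteq G_1$, whose elements have holonomy $h(A)=\begin{pmatrix}a&b\\ 0&1\end{pmatrix}$, and indicate at the end the small difference in the treatment of $G_2$.

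First I would split according to whether every $h(A)$ already has $a=1$. If so, $\Gamma$ by definition is contained in the second target group of the statement and there is nothing to prove. Otherwise I pick $A_0\in\Gamma$ with $(1,1)$-entry $a_0\neq 1$; since $a_0-1$ is a nonzero quaternion, hence invertible, a direct computation shows that conjugation by the ${\it Aff}(2,\mathbb{H})$-element built from $P=\begin{pmatrix}1&c\\ 0&1\end{pmatrix}$ with $c=-(a_0-1)^{-1}b_0$ turns $h(A_0)$ into the diagonal matrix $\operatorname{diag}(a_0,1)$ while preserving the upper-triangular shape defining $G_1$. So after this conjugation $\Gamma$ still sits inside $G_1$ and now contains an element with diagonal holonomy.

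The final step is to invoke the abelian hypothesis. For any other $A'\in\Gamma$ with $h(A')=\begin{pmatrix}a'&b'\\ 0&1\end{pmatrix}$, comparing the $(1,2)$-entries of $h(A')h(A_0)$ and $h(A_0)h(A')$ gives $b'=a_0 b'$, i.e.\ $(a_0-1)b'=0$, which forces $b'=0$. Hence every element of $\Gamma$ now has the form $\begin{pmatrix}a&0&r\\ 0&1&s\\ 0&0&1\end{pmatrix}$, and a further conjugation by the permutation matrix $\begin{pmatrix}0&1&0\\ 1&0&0\\ 0&0&1\end{pmatrix}$ swaps the two quaternionic coordinates and lands $\Gamma$ inside the first target group of the statement. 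The case $\Gamma\subseteq G_2$ is handled symmetrically: either all $d=1$ and we are already in the second target group, or some $d_0\neq 1$ allows one to diagonalize the corresponding holonomy by an analogous upper-triangular conjugator and then conclude, via commutativity, that all $b$-entries vanish, landing $\Gamma$ directly in the first target group without the need for the swap. The only delicate point throughout is tracking the quaternionic ordering in the conjugation formulas so that the inverse $(a_0-1)^{-1}$ (respectively $(d_0-1)^{-1}$) appears on the correct side; this is where I expect the bulk of the bookkeeping to live, but it is not a conceptual obstacle, since the relevant quaternion is guaranteed invertible by $a_0\neq 1$ (respectively $d_0\neq 1$), and the free-action hypothesis itself is used only through the previous proposition to secure the initial embedding of $\Gamma$ into $G_1\cup G_2$.
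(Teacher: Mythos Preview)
Your argument is correct and is precisely the adaptation of Fillmore--Scheuneman's Lemma~2.4 that the paper invokes without spelling out; the only slip is the sign of $c$ in the $G_1$ diagonalization (one actually needs $c=(a_0-1)^{-1}b_0$, respectively $c=-b_0(d_0-1)^{-1}$ on the right in the $G_2$ case), which you yourself flag as bookkeeping. Otherwise the route---reduce to $G_1$ or $G_2$ via the previous proposition, diagonalize one holonomy when some diagonal entry is $\neq 1$, then use commutativity of $h(\Gamma)$ to kill the off-diagonal entries---matches the paper's cited approach exactly.
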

\begin{proof}
The proof  given for the complex case in [Fillmore, Lemma 2.4] can be easily adapted to matrices with quaternionic entries.
\end{proof}
\begin{lemma}\label{2.8} If $ \begin{pmatrix}
a& b&r\\
0&1&0\\
0&0&1\\
\end{pmatrix}$
has no fixed points in $\H^2$ then $a=1$ and $b=0$. 
\end{lemma}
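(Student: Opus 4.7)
The plan is to write down the fixed-point equations explicitly and use that $\H$ is a division ring. A point $(x,y)\in\H^2$ is fixed by the given matrix precisely when
\[
\begin{cases} ax+by+r=x,\\ y=y,\end{cases}
\]
so the second equation is vacuous and we are reduced to analyzing solvability of
\[
(1-a)x - by = r
\]
in $\H$, treating $y\in\H$ as a free parameter.

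First I would handle the case $a\neq 1$. Then $1-a$ is a nonzero quaternion, hence invertible, and for any choice of $y\in\H$ we can set $x=(1-a)^{-1}(by+r)$ to produce a fixed point, contradicting the hypothesis. Therefore $a=1$.

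Next, assuming $a=1$, the equation collapses to $-by=r$. If $b\neq 0$, then $b$ is invertible in $\H$ and $y=-b^{-1}r$ gives, together with any $x\in\H$, a fixed point (indeed, a whole affine line of fixed points), again contradicting the hypothesis. Hence $b=0$, completing the proof.

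There is no real obstacle here: the content is just the observation that nonzero elements of $\H$ are invertible, so as soon as either $1-a$ or $b$ is nonzero the fixed-point equation is solvable. The only mild care needed is to remember to multiply on the correct side, since $\H$ is non-commutative; but since $x$ appears on the left of $(1-a)$ after rearrangement and $y$ appears on the right of $b$, the left/right multiplications above give genuine solutions without any commutativity issues.
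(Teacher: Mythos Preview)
Your proof is correct and is essentially the same as the paper's: both arguments write down the fixed-point equation and use invertibility of nonzero quaternions to solve it whenever $a\neq 1$ or $b\neq 0$. The only cosmetic difference is the order of the case split (the paper first treats $b\neq 0$, then $b=0$ with $a\neq 1$). One small slip in your final paragraph: after rearranging you have $(1-a)x=by+r$, so $x$ sits on the \emph{right} of $(1-a)$, not the left; your formula $x=(1-a)^{-1}(by+r)$ is nonetheless the correct left-multiplication and the argument goes through.
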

\begin{proof} If $b\neq 0$  then $(0,-b^{-1}r,1)$ is a  fixed point. Now, suppose $b=0$; if $a\neq 1$ then $(-(a-1)^{-1}r,y,1)$ is a fixed point. Hence the assertion follows.
\end{proof}
\begin{lemma} \label{abel2} If the subgroup $\Gamma \subseteq G_1$ acts freely on $\H^2$ then $h(\Gamma)$ is abelian and there exists a complex plane containing all $a$ such that \[\begin{pmatrix}
	a& b\\
	0&1
	\end{pmatrix}\in h(\Gamma).\]
\end{lemma}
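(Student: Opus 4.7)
The strategy is to exploit that $\Gamma$ is closed under commutators, combined with Lemma~\ref{2.8}: in $G_1$ the commutator of any two elements lies in a form to which Lemma~\ref{2.8} can be applied directly, and the holonomy of that commutator must therefore be trivial.

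First I would fix two arbitrary elements
\[
A_i=\begin{pmatrix} a_i & b_i & r_i\\ 0 & 1 & s_i\\ 0 & 0 & 1\end{pmatrix}\in\Gamma\subseteq G_1,\qquad i=1,2,
\]
and examine the $(2,3)$-entry in products of $G_1$-matrices. A direct inspection of the multiplication shows that this entry is additive: the $(2,3)$-entry of a product is the sum of the $(2,3)$-entries of the factors, and the $(2,3)$-entry of $A_i^{-1}$ is $-s_i$. Consequently the $(2,3)$-entry of $[A_1,A_2]=A_1A_2A_1^{-1}A_2^{-1}$ vanishes, so $[A_1,A_2]$ has the shape considered in Lemma~\ref{2.8}. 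Because $[A_1,A_2]\in\Gamma$ acts freely on $\H^2$, Lemma~\ref{2.8} forces the holonomy part $h([A_1,A_2])$ to be the identity. Since $h$ is a group homomorphism, this is precisely the statement $h(A_1)h(A_2)=h(A_2)h(A_1)$. As $A_1,A_2$ were arbitrary, $h(\Gamma)$ is abelian.

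From the abelianness of $h(\Gamma)$ one extracts in particular that the top-left entries $a\in\H$ appearing in elements of $h(\Gamma)$ pairwise commute. It is a standard fact about quaternions that $p,q\in\H$ commute if and only if their imaginary parts are $\R$-linearly dependent, equivalently iff they lie in a common complex subalgebra $\C_I=\R+\R I$ for some unit imaginary $I\in S^2\subset\Im\H$. I would conclude the argument by the following dichotomy: if every such $a$ is real, all of them lie in any complex plane (e.g.\ the standard $\C=\R+\R\bi$); otherwise, pick a fixed $a^{\ast}\in h(\Gamma)$ with $\Im(a^{\ast})\neq 0$ and set $I=\Im(a^{\ast})/|\Im(a^{\ast})|$. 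Every other $a$ in $h(\Gamma)$ commutes with $a^{\ast}$, hence has imaginary part in $\R\,I$, hence lies in $\C_I$. Thus all of the $a$'s lie in the single complex plane $\C_I$.

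The only delicate point is the initial computation showing that the $(2,3)$-entry is additive under the group law of $G_1$, so that Lemma~\ref{2.8} can in fact be applied to $[A_1,A_2]$; everything afterwards is either a tautological consequence of $h$ being a homomorphism or the elementary quaternionic linear algebra recalled above.
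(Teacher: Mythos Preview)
Your argument is correct and follows essentially the same route as the paper: both compute the commutator of two generic elements of $\Gamma\subseteq G_1$, observe that its $(2,3)$-entry vanishes (you phrase this as additivity of that entry under the $G_1$-group law, the paper just writes out $ABA^{-1}B^{-1}$ explicitly), apply Lemma~\ref{2.8} to force the holonomy of the commutator to be trivial, and then read off both the commutativity of $h(\Gamma)$ and the condition $aa'=a'a$ on the top-left entries. Your final paragraph handling the passage from ``pairwise commuting $a$'s'' to ``all $a$'s in a single $\C_I$'' is a welcome bit of extra care that the paper leaves implicit.
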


\begin{proof} Let $$A= \begin{pmatrix}
a& b&r\\
0&1&s\\
0&0&1\\
\end{pmatrix}\;\;B= \begin{pmatrix}
a'& b'&r'\\
0&1&s'\\
0&0&1\\
\end{pmatrix}$$ be elements of $\Gamma$. By direct computation, it is easy to verify that their inverse elements  are the quaternionic matrices
$$A^{-1}= \begin{pmatrix}
a^{-1}& -a^{-1}b&-a^{-1}r+a^{-1}bs\\
0&1&-s\\
0&0&1\\
\end{pmatrix}$$
$$B^{-1}=
 \begin{pmatrix}
{a'}^{-1}& -{a'}^{-1}b'&-{a'}^{-1}r'+{a'}^{-1}b's'\\
0&1&-s'\\
0&0&1\\
\end{pmatrix}.$$
Moreover, for some $c\in \mathbb H$, $$ABA^{-1}B^{-1}= 
\begin{pmatrix}
aa'{a}^{-1}{a'}^{-1}&aa'(-a^{-1} {a'}^{-1}b'-a^{-1}b)+ab'+b&c\\
0&1&0\\
0&0&1\\
\end{pmatrix}.$$
From the fact that $ABA^{-1}B^{-1}$ acts without fixed points, applying Lemma \ref{2.8}, we get that $$(ABA^{-1}B^{-1})_{12}=0\quad{\rm and}\quad aa'{a}^{-1}{a'}^{-1}=1$$ which immediately imply that $a$ and $a'$ belong to the same complex plane, and that $h(\Gamma)$ is abelian.
 \end{proof}
 \noindent Now, combining Lemmas \ref{abel1} and \ref{abel2}, we get
 \begin{corollary} If the subgroup $\Gamma\subseteq {\it Aff}(2,\H)$ acts freely on $\H^2$, then $\Gamma$ is conjugate in ${\it Aff}(2,\H))$ to a subgroup of $G_2.$
 \end{corollary}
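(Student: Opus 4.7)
The plan is to combine the previous proposition with Lemmas \ref{abel1} and \ref{abel2}. First I would invoke the proposition to deduce that $\Gamma$ is conjugate (via some affine transformation $T_1 \in {\it Aff}(2,\H)$) to a subgroup of $G_1$ or of $G_2$. The $G_2$ case is immediate, so the content of the corollary reduces to showing that, after a further conjugation, any freely acting subgroup of $G_1$ in fact lies in $G_2$.

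In the $G_1$ case, I would apply Lemma \ref{abel2}: since $T_1 \Gamma T_1^{-1}$ still acts freely on $\H^2$ and is contained in $G_1$, its holonomy part $h(T_1\Gamma T_1^{-1})$ is abelian. Now that abelianity is established, I would appeal to Lemma \ref{abel1}: there exists a further affine conjugation $T_2$ after which every element of $T_2 T_1 \Gamma T_1^{-1} T_2^{-1}$ takes one of the two normal forms
\[
\begin{pmatrix} 1 & 0 & r \\ 0 & d & s \\ 0 & 0 & 1 \end{pmatrix} \quad (d\neq 0), \qquad \begin{pmatrix} 1 & b & r \\ 0 & 1 & s \\ 0 & 0 & 1 \end{pmatrix}.
\]
Both of these families have top-left entry equal to $1$, which is exactly the defining condition of $G_2$ (the second family being the subfamily with $d=1$).

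Composing the conjugations, $(T_2 T_1)\Gamma(T_2 T_1)^{-1}\subseteq G_2$, which is what we want. The main thing to be careful about is the compatibility of the two conjugation steps: the second conjugation $T_2$ supplied by Lemma \ref{abel1} is applied to a group already sitting inside $G_1$, so one should check that the resulting matrices remain affine (which is automatic, since $T_2\in {\it Aff}(2,\H)$) and that the upper-left entry really collapses to $1$ in both normal forms, as it does by inspection. Apart from this bookkeeping, no new argument is required: the corollary is a direct synthesis of the previous proposition and the two abelianity lemmas.
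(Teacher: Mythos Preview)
Your argument is correct and matches the paper's approach exactly: the paper simply states that the corollary follows by combining Lemmas~\ref{abel1} and~\ref{abel2}, and your proposal spells out precisely this combination (together with the prior proposition reducing to $G_1$ or $G_2$). The bookkeeping you flag---that conjugation preserves freeness so the lemmas apply to $T_1\Gamma T_1^{-1}$, and that both normal forms of Lemma~\ref{abel1} lie in $G_2$---is the only content, and you have handled it correctly.
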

 
 \begin{lemma}\label{abel3} If the subgroup $\Gamma\subseteq {\it Aff}(2,\H)$
is abelian and acts freely on $\H^2$ then it is conjugate in ${\it Aff}(2,\H)$ to a subgroup of the group of all matrices of the form
$ \begin{pmatrix}
1& 0&r\\
0&d&0\\
0&0&1\\
\end{pmatrix}$ with $d\neq 0$ or  to a subgroup of the group of all matrices of the form $\begin{pmatrix}
1& b&r\\
0&1&s\\
0&0&1\\
\end{pmatrix}.$
 \end{lemma}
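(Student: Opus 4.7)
The plan is to invoke Lemma \ref{abel1} and then refine the first of its two possible normal forms by an additional translation, exploiting the full abelianness of $\Gamma$ rather than merely that of $h(\Gamma)$. Since $\Gamma$ is abelian, $h(\Gamma)$ is abelian too, so by Lemma \ref{abel1} I may assume, up to conjugation in ${\it Aff}(2,\H)$, that $\Gamma$ either sits inside the group of matrices $\begin{pmatrix} 1 & b & r \\ 0 & 1 & s \\ 0 & 0 & 1 \end{pmatrix}$, in which case the second conclusion is already attained and there is nothing more to do, or sits inside the group $H$ of matrices $\begin{pmatrix} 1 & 0 & r \\ 0 & d & s \\ 0 & 0 & 1 \end{pmatrix}$ with $d \neq 0$. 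I focus on the second situation.

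I further split according to whether some element of $\Gamma$ has $d \neq 1$. If every element has $d = 1$, then $\Gamma$ already fits the second target form with $b=0$ and we are done. Otherwise, fix $A_0 \in \Gamma$ with parameters $(r_0,d_0,s_0)$ and $d_0 \neq 1$, and conjugate $\Gamma$ by the translation $P$ obtained from the identity by placing $q = -(d_0-1)^{-1}s_0$ in its $(2,3)$-entry. A direct multiplication shows that $P^{-1} A P$ differs from $A \in H$ only in its $(2,3)$-entry, which becomes $(d-1)q + s$.

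The heart of the argument is to check that $(d-1)q + s = 0$ for every $A \in \Gamma$, so that the single translation $P$ kills the third column uniformly. I derive this from the abelianness of $\Gamma$ by comparing $A_0 A$ with $A A_0$: the $(2,2)$-entries yield $d d_0 = d_0 d$, while the $(2,3)$-entries give the key identity
\[(d_0-1)s = (d-1)s_0.\]
Since $d$ commutes with $d_0$, also $d-1$ commutes with the invertible $d_0-1$, and hence with $(d_0-1)^{-1}$. Substituting,
\[(d-1)q + s = -(d-1)(d_0-1)^{-1} s_0 + s = -(d_0-1)^{-1}(d-1) s_0 + s = -(d_0-1)^{-1}(d_0-1)s + s = 0,\]
so $P^{-1}\Gamma P$ lies inside the first target group.

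The main subtle point is precisely this uniform vanishing: it is where the full abelianness of $\Gamma$ is actually needed (the identity $(d_0-1)s = (d-1)s_0$ does not follow from abelianness of $h(\Gamma)$ alone), and where the noncommutativity of $\H$ must be handled carefully via the fact that two commuting quaternions generate a commutative subalgebra closed under inversion.
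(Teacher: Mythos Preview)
Your proof is correct and follows essentially the same approach as the paper: reduce via Lemma \ref{abel1}, then in the nontrivial case conjugate by a translation determined by a fixed element $A_0$ with $d_0\neq 1$, and use the abelianness of $\Gamma$ to force the $(2,3)$-entry of every conjugated element to vanish. The only cosmetic difference is that the paper first conjugates $A_0$ to kill its $(2,3)$-entry and then argues $ds'=s'$ (hence $s'=0$) for the remaining elements, whereas you compute the conjugated $(2,3)$-entry $(d-1)q+s$ for a general element directly and verify it vanishes via the relation $(d_0-1)s=(d-1)s_0$; these are the same computation packaged slightly differently.
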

\begin{proof}Since $h(\Gamma)$ is abelian we can use Lemma \ref{abel1} and conjugate $\Gamma$ into the group of all
$\begin{pmatrix}
1& b&r\\
0&1&s\\
0&0&1\\
\end{pmatrix}$
and we are done, or into the group of all $ \begin{pmatrix}
1& 0&r\\
0&d&s\\
0&0&1\\
\end{pmatrix}.$ In the latter case: if all entries $d=1$ we are done. Otherwise suppose $d\neq1$ for some element $A$ in $\Gamma$.  After a conjugation with  
\[C=\begin{pmatrix}
1& 0&0\\
0&1&(1-d)^{-1}s\\
0&0&1\\
\end{pmatrix},\]
the matrix $A$ is taken to 
$\begin{pmatrix}
1&0&r\\
0&d&0\\
0&0&1\\
\end{pmatrix}$
and all the other elements to 
$A'=\begin{pmatrix}
1& 0&r'\\
0&d'&s'\\
0&0&1\\
\end{pmatrix}.$ 
Now, since $\Gamma$ is abelian, we have that $CAC^{-1}A'=A'CAC^{-1}$ which implies  that $d'd=dd'$ (so $d$ and $d'$ belong to the same complex plane), and $ds'=s'$. Thus if there exists $s'\neq 0$ we get $d=1$, a contradiction.
\end{proof}

\noindent In what follows in addiction to the hypothesis that the action of $\Gamma$ on $\H^2$ is {\em free}, we will assume that $\Gamma$ acts {\em properly discontinuously} 
and that $\H^2/\Gamma$ is compact. This has important consequences that we collect here. \\
We recall the First Theorem of Bieberbach, see, e.g., \cite[Theorem 1]{Au}.
\begin{theorem}\label{auslander}
Let $G$ be a subgroup of ${\it Aff}(n,\C)$, acting freely and properly discontinuously on $\C^n$ and such that $\C^n/G$ is compact. Then the subgroup $\widetilde G \subseteq G$ of pure translations is generated by $n$ linearly independent translations and $G/\widetilde G\simeq h(G)$ is a finite group. 
\end{theorem}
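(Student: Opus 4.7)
The plan is to follow the classical Bieberbach--Auslander strategy, organized around the short exact sequence
$$1 \longrightarrow \widetilde G \longrightarrow G \stackrel{h}{\longrightarrow} h(G) \longrightarrow 1.$$
The two conclusions decompose naturally: one must show $h(G)$ is finite (equivalently, $\widetilde G$ has finite index in $G$), and then that the discrete, finite-index, translational subgroup $\widetilde G$ is a full-rank lattice in $\C^n$.

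The first main step is to produce a finite-index nilpotent subgroup $G_0 \subseteq G$. Since $G$ is discrete in the Lie group ${\it Aff}(n,\C)$ (a consequence of the properly discontinuous action on the Hausdorff space $\C^n$) and the quotient is compact, I would invoke the Zassenhaus--Kazhdan--Margulis lemma: there is a neighborhood $U$ of the identity in ${\it Aff}(n,\C)$ with the property that any discrete subgroup generated by elements of $U$ is nilpotent. Combining this with a uniform diameter estimate on a fundamental domain for $G$ (which exists by cocompactness), one extracts, after a suitable conjugation, a subgroup $G_0 \subseteq G$ of finite index generated entirely by elements in $U$; thus $G_0$ is nilpotent.

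The second main step is to show that any nilpotent subgroup $N$ of ${\it Aff}(n,\C)$ acting freely on $\C^n$ must consist entirely of pure translations. Proceeding by induction on the nilpotency class, the base case concerns the center $Z(N)$: a central element $(A,b)$ with $A \neq I$ commutes with every element of $N$, and I would show this rigidity is incompatible with the freeness constraint---which requires $1$ to be an eigenvalue of $A$ and $b$ to lie outside the image of $A-I$---because commuting with a sufficiently rich set of translations eventually produces a fixed point. The upper central series propagates the property outward, yielding $N \subseteq \widetilde G$. Applying this with $N = G_0$ shows $\widetilde G$ has finite index in $G$; the remaining assertion is routine, since $\widetilde G$ inherits the cocompact action on $\C^n$ and is therefore a lattice of maximal real rank, generated by a real basis of translations.

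The principal obstacle is the second step---converting algebraic nilpotency into the geometric statement that all elements act by translations. Freeness enters in an essential way through the characterization above of fixed-point-free affine maps, and the delicate combinatorics of commutators in $N$ against this constraint is exactly the content of Auslander's argument in \cite{Au}. The complex structure itself plays only a minor role, the proof essentially reducing to the real affine one applied to $\R^{2n}$.
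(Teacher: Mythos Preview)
The paper does not supply a proof of this statement; it is quoted from \cite{Au} and used as a black box, so there is no argument in the paper to compare yours against directly.

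That said, your second main step is false as stated, and this is a genuine gap rather than a missing detail. It is \emph{not} true that a nilpotent subgroup of ${\it Aff}(n,\C)$ acting freely on $\C^n$ must consist of pure translations. Already the cyclic group generated by $g:(x,y)\mapsto (x+y,\,y+1)$ in ${\it Aff}(2,\C)$ is abelian and acts freely (since $g^m(x,y)=(x+my+\tbinom{m}{2},\,y+m)$ has no fixed point for $m\neq 0$), yet $h(g)=\left(\begin{smallmatrix}1&1\\0&1\end{smallmatrix}\right)\neq I$. Adjoining the translations by $1$, $i$ in the first coordinate and by $i$ in the second produces a $2$-step nilpotent group acting freely, properly discontinuously and cocompactly on $\C^2$ with \emph{infinite} holonomy; this is precisely the Iwasawa/Kodaira--Thurston construction, and it is the same phenomenon exploited in \cite{FS} and in Theorem~\ref{finale} of this paper. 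So your inductive scheme on the upper central series cannot succeed: unipotent holonomy is perfectly compatible with free, cocompact actions.

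The resolution is that Auslander's result in \cite{Au} concerns locally \emph{Hermitian} manifolds, so the intended hypothesis is $h(G)\subseteq U(n)$ rather than $h(G)\subseteq GL(n,\C)$. Under that assumption the Bieberbach mechanism works without your step~2: compactness of $U(n)$, discreteness of $G$, and cocompactness force $h(G)$ to be finite directly (one need never pass through a nilpotent reduction). Your Zassenhaus step would only yield ``virtually nilpotent'', which is the shape of Theorem~\ref{finale} rather than of the Bieberbach-type statement here.
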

As a direct consequence we get
\begin{lemma}\label{base} If the subgroup $\Gamma\subseteq {\it Aff}(2,\H)$ acts freely, properly discontinuously on $\H^2$ and  $\H^2/\Gamma$ is compact, then the set of translational parts $(r,s)$ of elements of the form
$\begin{pmatrix}
a& b&r\\
0&1&s\\
0&0&1\\
\end{pmatrix}$
of $\Gamma$ contains a basis for $\H^2$ as a real vector space.
\end{lemma}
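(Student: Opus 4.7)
The plan is to reduce the lemma to the complex affine Bieberbach theorem (Theorem~\ref{auslander}) via the representation $\psi$ recalled in Section~2. I would first extend the injective algebra map $\psi:\mathcal{M}(n,\H)\to \mathcal{M}(2n,\C)$ to a faithful representation $\Psi:{\it Aff}(n,\H)\hookrightarrow {\it Aff}(2n,\C)$ by sending the quaternionic affine map $Q\mapsto AQ+B$ to the complex affine map $z\mapsto \psi(A)z+\widetilde\psi(B)$, where $\widetilde\psi:\H^n\to\C^{2n}$ is the natural $\R$-linear identification coming from the decomposition $\H=\C\oplus\C j$. Under the induced bijection $\H^2\cong\C^4$ the two actions match, so $\Psi(\Gamma)\subseteq {\it Aff}(4,\C)$ acts freely and properly discontinuously on $\C^4$ with compact quotient $\C^4/\Psi(\Gamma)\cong\H^2/\Gamma$.

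Applying Theorem~\ref{auslander} to $\Psi(\Gamma)$, the pure translations of $\Psi(\Gamma)$ generate a lattice of full real rank in $\C^4\cong\R^8$, so their translational parts form an $\R$-basis of $\C^4$. Because $\psi$ is injective and $\psi(I_2)=I_4$, each pure translation in $\Psi(\Gamma)$ is precisely the $\Psi$-image of a pure quaternionic translation in $\Gamma$, i.e.\ of an element
\[
\begin{pmatrix} 1 & 0 & r\\ 0 & 1 & s\\ 0 & 0 & 1 \end{pmatrix}\in\Gamma,
\]
which is a special case of the matrix shape in the statement of the lemma (with $a=1$ and $b=0$). Pulling the $\R$-basis of $\C^4$ back to $\H^2$ via $\widetilde\psi^{-1}$, the translational parts $(r,s)$ of these pure quaternionic translations already span $\H^2$ as a real vector space; a fortiori, so do those of the broader family of elements identified in the lemma.

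The main subtle point is the correct interpretation of Theorem~\ref{auslander} in our situation: compactness of $\C^4/\Psi(\Gamma)$ forces the $\Z$-module of pure translations to have full real rank $8$ in $\C^4$, which is exactly what is needed to obtain an $\R$-basis of $\H^2\cong\R^8$ after pulling back via $\widetilde\psi$. Beyond this, the injectivity of $\psi$ and the intertwining of the two actions through $\widetilde\psi$ are formal, so the lemma is immediate once the Bieberbach conclusion is correctly parsed.
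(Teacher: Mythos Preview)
Your proposal is correct and follows exactly the approach the paper takes: the paper's proof consists of a single sentence, namely that the result ``easily follows taking into account that ${\it Aff}(2,\H)$ can be identified as a subgroup of ${\it Aff}(4,\C)$,'' and you have simply spelled out that identification via $\psi$ and $\widetilde\psi$ and applied Theorem~\ref{auslander}. Your remark that compactness of the quotient, together with finiteness of $G/\widetilde G$, forces the translation lattice to have full real rank~$8$ is the right way to read Theorem~\ref{auslander} here, and it is the only point that needs any care.
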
 

\begin{proof} 
The proof easily follows taking into account that ${\it Aff}(2,\H)$ can be identified as a subgroup of ${\it Aff}(4,\C)$.   	
\end{proof}
\noindent We can now solve completely the abelian case: indeed, applying the previous Lemma, we understand that the first possibility of Lemma \ref{abel3} does not occur; thus we have
\begin{corollary} If the subgroup $\Gamma\subseteq {\it Aff}(2,\H)$
is abelian and acts freely and properly discontinuously  on $\H^2$, and  $\H^2/\Gamma$ is compact,  then it is conjugate in ${\it Aff}(2,\H)$ to a subgroup of the group of all matrices of the form
$ \begin{pmatrix}
1& b&r\\
0&1&s\\
0&0&1\\
\end{pmatrix}.$
\end{corollary}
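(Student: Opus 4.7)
The plan is to combine Lemma \ref{abel3} with Lemma \ref{base} and eliminate one of the two alternatives offered by the former. Since acting freely and properly discontinuously with compact quotient are preserved by conjugation in ${\it Aff}(2,\H)$, we may assume, after such a conjugation, that $\Gamma$ sits either in the group of matrices
\[
\begin{pmatrix} 1 & 0 & r \\ 0 & d & 0 \\ 0 & 0 & 1 \end{pmatrix}, \qquad d \in \H \setminus \{0\},\ r \in \H,
\]
(alternative (i)) or in the group of matrices
\[
\begin{pmatrix} 1 & b & r \\ 0 & 1 & s \\ 0 & 0 & 1 \end{pmatrix}
\]
(alternative (ii)). The strategy is to show that compactness of $\H^2/\Gamma$ rules out (i), so that only (ii) remains, which is exactly what the corollary claims.

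To rule out (i), I would invoke Lemma \ref{base}: it asserts that the translational parts $(r,s)$ of the elements of $\Gamma$ whose $(2,2)$-entry is $1$ contain a basis of $\H^2\cong \R^{8}$ as a real vector space. In alternative (i) the $(2,2)$-entry equals $d$, so the only elements contributing are those with $d=1$, and for these the translational part has the shape $(r,0)$. Such pairs lie in the real subspace $V=\{(r,0):r\in\H\}\subset\H^2$, of real dimension $4$, which cannot contain any $\R$-basis of the $8$-dimensional real vector space $\H^2$. This contradiction rules out (i).

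Once Lemmas \ref{abel3} and \ref{base} are in hand the argument is essentially a dimension count, so I do not anticipate any serious obstacle. The only subtle point is a bookkeeping one: one must track the conjugation produced by Lemma \ref{abel3} and verify that Lemma \ref{base} applies to the conjugated group, which is the case since conjugation in ${\it Aff}(2,\H)$ preserves freeness of the action, proper discontinuity, and compactness of the quotient.
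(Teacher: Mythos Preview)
Your proposal is correct and follows essentially the same route as the paper: the paper also combines Lemma~\ref{abel3} with Lemma~\ref{base}, noting that in the first alternative the translational parts all lie in a $4$-dimensional real subspace and hence cannot contain a real basis of $\H^2$, so only the unipotent alternative survives. Your remark that conjugation preserves the hypotheses of Lemma~\ref{base} is the right justification for applying it after normalising via Lemma~\ref{abel3}.
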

\begin{lemma} \label{nonabel} If the subgroup $\Gamma\subseteq {\it Aff}(2,\H)$
acts properly discontinuously  on $\H^2$, and  contains elements $$A=\begin{pmatrix}
1& b&r\\
0&d&s\\
0&0&1\\
\end{pmatrix} \rm{and}\;\; B=\begin{pmatrix}
1& f&u\\
0&h&v\\
0&0&1\\
\end{pmatrix}$$ such that $AB\neq BA$, then $d$ is a root of unity.
\end{lemma}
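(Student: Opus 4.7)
The plan is a proof by contradiction patterned on the complex analogue in \cite{FS}. Suppose $d$ has infinite order. I will construct a sequence of pairwise distinct elements of $\Gamma$ converging in ${\it Aff}(2,\H)$ to some $\gamma_\infty$; this contradicts proper discontinuity, because if $K \subset \H^2$ is any compact set whose interior meets $\gamma_\infty(K)$, then infinitely many of the converging $\gamma_n$ also satisfy $\gamma_n K \cap K \neq \emptyset$.

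First I would show $|d| = 1$ using proper discontinuity of the cyclic subgroup $\langle A \rangle$ alone. If $|d| \neq 1$, choose a point $(x,y)$ with $y \neq y_0 := (1-d)^{-1} s$. Then the $y$-coordinate of $A^n(x,y)$ equals $d^n(y - y_0) + y_0$, which accumulates at $y_0$ as $n \to +\infty$ (if $|d| < 1$) or $n \to -\infty$ (if $|d| > 1$); summability of the geometric series $\sum d^n$ on the relevant side forces the $x$-coordinate to converge as well, producing an accumulation point of the $\langle A \rangle$-orbit of $(x,y)$ in $\H^2$ and violating proper discontinuity.

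Next, a direct matrix multiplication in ${\it Aff}(2,\H)$ yields
\[
[A,B] = \begin{pmatrix} 1 & b' & r' \\ 0 & dhd^{-1}h^{-1} & s' \\ 0 & 0 & 1 \end{pmatrix},
\]
with $b', r', s'$ explicit polynomials in the entries of $A$ and $B$, and $[A,B] \neq I$ since $AB \neq BA$. As $d$ is not a root of unity and $|d| = 1$, the powers $\{d^n\}$ are dense in a $1$-torus in $\H^*$; pick $n_k \to \infty$ with $d^{n_k} \to 1$ and set $\gamma_{n_k} := A^{n_k} [A,B] A^{-n_k} \in \Gamma$. Writing $h(A)^n = \begin{pmatrix} 1 & \beta_n \\ 0 & d^n \end{pmatrix}$ with $\beta_n = \sum_{j=0}^{n-1} bd^j$, one checks that both the linear and the translation parts of $\gamma_{n_k}$ depend on $d^{\pm n_k}$ only through the bounded geometric sums $(d^{\pm n_k} - 1)(d-1)^{-1}$, which go to $0$ along the subsequence. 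Hence $\gamma_{n_k} \to [A,B]$ in ${\it Aff}(2,\H)$, and as long as $b' \neq 0$ or $s' \neq 0$ the entries $b' d^{-n_k}$ or $d^{n_k} s'$ keep the $\gamma_{n_k}$ pairwise distinct, providing the required convergent sequence of distinct $\Gamma$-elements.

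The main obstacle is the degenerate subcase $b' = s' = 0$, where $[A,B] = T_{(r',0)}$ is a pure $x$-translation, commutes with $A$, and makes the above sequence constant. I would handle this by either swapping the roles of $A$ and $B$, so that $h$ plays the role of $d$ and the symmetric argument gives that $h$ is a root of unity, which combined with the vanishing relations $b'=s'=0$ and $dhd^{-1}h^{-1}=1$ forces $d$ to be one too; or by replacing conjugation by $A^n$ with conjugation by $B^n$ applied to a higher commutator such as $[B,[A,B]]$, producing a non-constant convergent sequence in $\Gamma$. The technical crux is to verify that in every degenerate configuration at least one of these variants yields the required accumulating sequence.
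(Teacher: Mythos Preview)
Your reduction to $|d|=1$ in the first step is incorrect, and this is the main gap. It is \emph{not} true that proper discontinuity of $\langle A\rangle$ alone forces $|d|=1$. Write $y_0=(1-d)^{-1}s$; a direct computation gives, for the orbit of $(x,y)$,
\[
y^{(n)}=d^n(y-y_0)+y_0,\qquad
x^{(n)}=x+b(d-1)^{-1}(d^n-1)(y-y_0)+n\bigl(by_0+r\bigr).
\]
The geometric--series term in $x^{(n)}$ is indeed bounded when $|d|<1$, but there is an additional term $n(by_0+r)$ which is linear in $n$ and does not vanish in general. For a concrete counterexample take $b=s=0$, $r=1$, $d=2$: then $A^n(x,y)=(x+n,2^ny)$, and $\langle A\rangle$ acts properly discontinuously on all of $\H^2$ even though $|d|\neq 1$. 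So the case $|d|\neq 1$ cannot be disposed of without using the second, non--commuting element $B$. This is exactly what the paper does: it forms $C_n=A^{-n}BA^{n}B^{-1}$, shows these are pairwise distinct when $d$ is not a root of unity, and then, in the cases $|d|>1$ and $|d|<1$, chooses a carefully tailored base point (depending on $h,v,s$) whose $C_n$--orbit is bounded. The cancellation that makes this orbit bounded comes from the specific shape of $C_n$, not from powers of $A$ alone.

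Your handling of the degenerate subcase is also not convincing. If $b'=s'=0$ and $dhd^{-1}h^{-1}=1$, swapping $A$ and $B$ would at best show that $h$ is a root of unity; but $dh=hd$ together with $h$ a root of unity says nothing about $d$. The paper avoids this pitfall by working with $C_n=A^{-n}BA^nB^{-1}$ rather than with conjugates of $[A,B]$: the entries $f_n,v_n$ of $C_n$ contain factors $(d^n-1)$ and $(1-d^{-n})$ multiplying expressions that vanish \emph{only} when $AB=BA$ (this is the content of system~\eqref{sistema} in the paper). Thus distinctness of the $C_n$ is guaranteed directly by the hypothesis $AB\neq BA$, and no separate degenerate case arises.
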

\noindent The proof is similar to the one in the complex case, but the computations are much more complicated, due to the non commutativity of quaternions. 
\begin{proof}
By direct computation we obtain that for any $n\in\mathbb{N}$ 
\[A^n=\begin{pmatrix}
1& b(d-1)^{-1}(d^n-1)&b(d-1)^{-2}(d^n-1)s+nr-bn(d-1)^{-1}s\\
0&d^n&(d-1)^{-1}(d^n-1)s\\
0&0&1\\
\end{pmatrix}\]
Hence
\[C_n=A^{-n}BA^{n}B^{-1}=\begin{pmatrix}
1& f_n&u_n\\
0&h_n&v_n\\
0&0&1\\
\end{pmatrix} \]
where
\begin{equation*}
\begin{aligned}
h_n&=d^{-n}hd^nh^{-1}\\
f_n&=f(d^n-1)h^{-1}+b(d-1)^{-1}(d^{n}-1)(1-d^{-n}hd^n)h^{-1}\\
u_n&=fh^{-1}v-fd^nh^{-1}v-b(d-1)^{-1}(d^n-1)h^{-1}v\\
&+b(d-1)^{-1}(d^n-1)d^{-n}hd^nh^{-1}v+f(d-1)^{-1}(d^n-1)s\\
&-b(d-1)^{-1}(d^n-1)d^{-n}h(d-1)^{-1}(d^{n}-1)s\\
&-b(d-1)^{-1}(d^n-1)d^{-n}v+b(d-1)^{-2}(d^n-1)^2d^{-n}s\\
v_n&=-d^{-n}hd^{n}h^{-1}v+d^{-n}h(d-1)^{-1}(d^n-1)s+d^{-n}v-d^{-n}(d-1)^{-1}(d^n-1)s
\end{aligned}
\end{equation*}
Suppose that $d$ is not a root of unity. Let us show that, in this case, $C_n\neq C_m$ for $n\neq m$. 

Assume first that $hd\neq dh$. Then $C_n=C_m$ if and only of $h_n=h_m$, that is if and only if $n=m$.

If instead $h$ and $d$ commute, then the entries of $C_n$ become
\begin{equation*}
\begin{aligned}
h_n&=1\\
f_n&=[fh^{-1}+b(d-1)^{-1}(h^{-1}-1)](d^n-1)\\
u_n&=-f(d^{n}-1)[h^{-1}v-(d-1)^{-1}s]-b(d-1)^{-1}(d^n-1)(h^{-1}v-v)\\
&+b(d-1)^{-2}(2-d^{-n}-d^n)(s-hs)-b(d-1)^{-1}(1-d^{-n})v\\
v_n&=(1-d^{-n})[-v+(d-1)^{-1}(hs-s)]
\end{aligned}
\end{equation*}
Suppose that $C_n=C_m$ for $n\neq m$. Then $f_n=f_m$ and $v_n=v_m$ that is
\begin{equation*}
\left\{\begin{array}{l}
fh^{-1}+b(d-1)^{-1}(h^{-1}-1)=0\\
-v+(d-1)^{-1}(hs-s)=0
\end{array}
\right.
\end{equation*}
which is equivalent to
\begin{equation}\label{sistema}
\left\{\begin{array}{l}
f(d-1)+b(1-h)=0\\
-(d-1)v+(1-h)s=0
\end{array}
\right. 
\end{equation}
The first equation in system \eqref{sistema} is equivalent to the fact that the holonomies $h(A)$ and $h(B)$ commute. Recalling that $d\neq 1$, we have that $h(A)$ can be diagonalised, thus also $h(B)$ can be diagonalised via the same matrix (direct computation: indeed suppose that $h(B)$ cannot be diagonalised, since they commute they can be simultaneously triangulated, and they still commute, thus $f=fd$ which implies $f=0$ since $d\neq 1$). Hence, up to conjugation, 
\[A=\begin{pmatrix}
1& 0&r\\
0&d&s\\
0&0&1\\
\end{pmatrix} \quad \text{and} \quad B=\begin{pmatrix}
1& 0&u\\
0&h&v\\
0&0&1\\
\end{pmatrix}\]
and if the second equation in System $\eqref{sistema}$ is satisfied, $A$ and $B$ commute, in contradiction with our hypothesis. Therefore the matrices $C_n$ are all distinct.


Let us show that, in both cases, if $d$ is not a root of unity, the action of $\Gamma$ is not properly discontinuous. \\
Suppose first that $|d|=1$. 
Consider the sequence of points $(x_n,y_n)\in \H^2$ obtained applying the matrices $C_n$ to the point $(0,1)$,
\begin{equation*}\label{orbita}
\begin{array}{l}
x_n=f_n+u_n\\
y_n=d_n+v_n
\end{array}
\end{equation*}
Up to subsequences, $d^n$ goes to $1$ as $n$ goes to infinity, thus $h_n$ goes to $1$ and $f_n,u_n$ and $v_n$ go to $0$. Hence the orbit of $(0,1)$ has $(0,1)$ as an accumulation point.\\
Suppose now that $|d|>1$, and consider the orbit of $(0,v-h(d-1)^{-1}s)$. In this case 
\begin{equation*}\label{orbita}
\begin{array}{l}
x_n=f_n(v-h(d-1)^{-1}s)+u_n\\
y_n=d_n(v-h(d-1)^{-1}s)+v_n
\end{array}
\end{equation*}
Up to subsequences, $h_n$ tends to $a\in\H$, $|a|=1$ as $n$ tends to infinity  (if $d$ and $h$ commute $h_n=1$ for any $n$). Hence, with long but straightforward computations, we get 
\begin{equation*}\label{orbita2}
 \begin{array}{l}
x_n=b(d-1)^{-1}[(d-1)^{-1}(d^{-n}-1)s+(1-d^{-n})h(d-1)^{-1}s-(1-d^{-n})v]\\
y_n=-d^{-n}hd^n(d-1)^{-1}s + d^{-n}h(d-1)^{-1}(d^{n}-1)s +d^{-n}v-(d-1)^{-1}(1-d^{-n})s
\end{array}
\end{equation*}
Taking into account the fact that $d^{-n}hd^{n}$ is bounded, since in modulus equals $|h|$, we obtain that $(x_n,y_n)$ has an accumulation point.\\
The case where $|d|<1$ can be treated analogously, considering the orbit through the same point via the matrices $\widetilde {C_{n}}=A^nBA^{-n}B^{-1}$.


\end{proof}	
In order to state and prove the next result, we define and list all (up to conjugation) finite subgroups of  unitary quaternions; to do this we use the notations and approach of the book \cite{conway} by Conway and Smith.
Let  $I,J\in  \H$ be purely imaginary unit quaternions, with $I\perp J$, and let $\{1, I, J, IJ=K\}$ be a basis for $\mathbb{H}$ having the usual multiplication rules. For 

\begin{align*}
 \sigma=\frac{\sqrt 5-1}{2},\ \ \  \tau=\frac{\sqrt 5+1}{2}
\end{align*}
we consider the unitary quaternions

\begin{align*}
&I_{\mathbb{I}}=\frac{I+\sigma J+\tau K}{2};\quad
&I_{\mathbb{O}}=\frac{J+K}{\sqrt 2};\quad
&\omega=\frac{-1+I+J+K}{2};\\
&I_{\mathbb{T}}=I;\quad
&e_n=e^{\frac{\pi I}{n}}.
\end{align*}
and define the finite subgroups of the sphere $\mathbb S^3$  generated as follows:
\begin{align*}
& 2\mathbb{I}=\langle I_{\mathbb{I}}, \omega\rangle,\ \ \ 
2\mathbb{O}=\langle I_{\mathbb{O}}, \omega\rangle,\ \ \ 
2\mathbb{T}=\langle I_{\mathbb{T}}, \omega\rangle,\\
&2D_{2n}= \langle e_n, J\rangle,\ \ \ 
2C_n=\langle e_n\rangle,\ \ \  
1C_n=\langle e_{\frac{n}{2}}\rangle\ \textnormal{($n$ odd)}.
\end{align*}
The following result holds (see, e.g., \cite[Theorem 12, page 33]{conway}).

\begin{theorem}\label{subgroups}
Every finite subgroup of the sphere $\mathbb S^3$ of unitary quaternions is conjugated to a subgroup of the following list:
$$
2\mathbb{I},\ \ \  2\mathbb{O},\ \ \  2\mathbb{T}, \ \ \  2D_{2n}, \ \ \  2C_n, \ \ \  1C_n \textnormal{($n$ odd)}.
$$
\end{theorem}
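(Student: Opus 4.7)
The plan is to reduce the classification to the well-known classification of finite subgroups of $\SO(3)$ via the double cover $\pi\colon \mathbb{S}^3 \to \SO(3)$ sending $q$ to the rotation $v \mapsto q v q^{-1}$ of the space of pure imaginary quaternions $\cong \mathbb{R}^3$. The kernel of $\pi$ is $\{\pm 1\}$, and a key elementary observation is that $-1$ is the unique element of order two in $\mathbb{S}^3$: writing $q=a+b\mathbf{i}+c\mathbf{j}+d\mathbf{k}$ of unit norm, the equation $q^2=1$ forces $b=c=d=0$ and $a=\pm 1$.

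First I would dichotomize: for a finite subgroup $G\subseteq \mathbb{S}^3$ either $-1\in G$, in which case $G = \pi^{-1}(\pi(G))$ has order $2|\pi(G)|$, or $|G|$ is odd (for any element of even order would yield an order-two element, forced to be $-1$). Next I would invoke the Klein classification of finite subgroups of $\SO(3)$: up to conjugation they are the cyclic groups $C_n$, the dihedral groups $D_n$, and the rotation groups $\mathbb{T}, \mathbb{O}, \mathbb{I}$ of the regular tetrahedron, cube, and icosahedron. The classical proof fixes $G\subset \SO(3)$ and counts orbits of poles of non-identity rotations via Burnside; the resulting orbit equation admits only the five solutions above.

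For the lifting step, if $-1\in G$ I would pull back each Klein family to obtain the binary polyhedral groups $2C_n, 2D_{2n}, 2\mathbb{T}, 2\mathbb{O}, 2\mathbb{I}$, and then verify that suitable conjugation in $\mathbb{S}^3$ brings the generators into the explicit form $e_n$, $J$, $\omega$, $I_{\mathbb{T}}$, $I_{\mathbb{O}}$, $I_{\mathbb{I}}$ listed. This is done by aligning rotation axes with the coordinate purely imaginary quaternions and computing preimages under $\pi$ directly, using the identity that a rotation by $2\theta$ about a unit imaginary $u$ is covered by $\pm(\cos\theta + u\sin\theta)$; for example, $\omega = (-1+\mathbf{i}+\mathbf{j}+\mathbf{k})/2$ doubly covers the $2\pi/3$-rotation about $(1,1,1)/\sqrt{3}$. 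If instead $-1\notin G$, then $|G|$ is odd and so is $|\pi(G)|$; since $D_n, \mathbb{T}, \mathbb{O}, \mathbb{I}$ all have even order, $\pi(G)$ must be a cyclic group $C_n$ with $n$ odd, and $\pi$ restricts to an isomorphism, giving the family $1C_n$ generated by $e_{n/2}$.

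The main obstacle, apart from the Klein classification itself, is the explicit identification of generators in the exceptional cases $2\mathbb{T}, 2\mathbb{O}, 2\mathbb{I}$: these require explicit quaternion realizations of the polyhedral rotation axes, and for the icosahedral case the golden ratio enters through $\sigma$ and $\tau$. Matching such generators across different conjugates is more bookkeeping than insight, and this is exactly the content of \cite{conway} from which the statement is quoted.
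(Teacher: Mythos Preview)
The paper does not supply its own proof of this theorem: it is quoted as \cite[Theorem~12, p.~33]{conway} and used as input for what follows, so there is no in-paper argument to compare against. Your sketch is the standard one and is essentially the route taken in \cite{conway}: pass through the $2:1$ cover $\pi\colon \mathbb{S}^3\to \SO(3)$, invoke Klein's classification of finite rotation groups, and split according to whether $-1\in G$ (yielding the binary lifts $2C_n$, $2D_{2n}$, $2\mathbb{T}$, $2\mathbb{O}$, $2\mathbb{I}$) or $-1\notin G$ (forcing $|G|$ odd and hence $G\cong 1C_n$ with $n$ odd). The key steps---that $-1$ is the unique involution in $\mathbb{S}^3$, that conjugacy in $\SO(3)$ lifts to conjugacy in $\mathbb{S}^3$, and that an odd-order preimage inside the cyclic group $2C_n$ is exactly $1C_n$---are all correct as you state them.
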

\noindent Recall the following
\begin{definition} \label{unipotent} A group $G$ is said to be \emph{unipotent} if all of its elements are unipotent, i.e.,  for all  $g\in G$,  there exists $n\in \mathbb N$ such that $(g-1)^n=0$. 
\end{definition}
\noindent We are now ready to state and prove 

\begin{theorem}\label{finale} If the subgroup $\Gamma\subseteq {\it Aff}(2,\H)$
acts freely and properly discontinuously  on $\H^2$, and  $\H^2/\Gamma$ is compact,  then $\Gamma$ contains a unipotent normal subgroup $\Gamma_0$ of finite index such that $\Gamma/\Gamma_0$ is isomorphic to  a finite subgroup of $\mathbb{S}^3$. 
\end{theorem}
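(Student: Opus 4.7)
The plan is to identify the unipotent normal subgroup $\Gamma_0$ as the kernel of the homomorphism given by the $(2,2)$-entry of the matrix form used throughout this section, and then to invoke Theorem~\ref{auslander} via the embedding ${\it Aff}(2,\H)\hookrightarrow {\it Aff}(4,\C)$ in order to force finiteness of the quotient.

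The first step is to apply the Corollary following Lemma~\ref{abel2} and replace $\Gamma$ by a conjugate sitting inside $G_2$, so that every $A\in\Gamma$ has the form
$$A=\begin{pmatrix} 1 & b & r\\ 0 & d & s\\ 0 & 0 & 1\end{pmatrix}.$$
A direct multiplication of two such matrices shows that the $(2,2)$-entry is multiplicative, so the assignment $d\colon\Gamma\to\H^{\ast}$ sending $A$ to its $(2,2)$-entry is a group homomorphism. I set $\Gamma_0:=\ker d$; by construction it is normal in $\Gamma$. Its elements are upper triangular with $1$'s on the diagonal, hence $A-I$ is strictly upper triangular of size three and $(A-I)^3=0$. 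Therefore $\Gamma_0$ is unipotent in the sense of Definition~\ref{unipotent}.

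The next step is to bound $\Gamma/\Gamma_0$. I would invoke Theorem~\ref{auslander} exactly as in the proof of Lemma~\ref{base}: viewing ${\it Aff}(2,\H)$ as a subgroup of ${\it Aff}(4,\C)$, the action of $\Gamma$ on $\C^4\cong\H^2$ is still free, properly discontinuous and cocompact. Theorem~\ref{auslander} then forces the complex holonomy to be finite, so the quaternionic holonomy $h(\Gamma)\subseteq GL(2,\H)$ is finite as well. Since $d$ factors through $h$ (via the obvious projection $h(\Gamma)\to\H^{\ast}$ reading off the bottom-right entry), the image $d(\Gamma)\subseteq\H^{\ast}$ is also finite.

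Finally, every element of a finite subgroup of $\H^{\ast}$ has modulus $1$, so $d(\Gamma)$ is a finite subgroup of the unit sphere $\mathbb{S}^3\subset\H$, and by Theorem~\ref{subgroups} it is (conjugate to) one of the explicit groups listed there. The first isomorphism theorem gives $\Gamma/\Gamma_0\cong d(\Gamma)$, completing the argument. The most delicate step is recognizing that the natural homomorphism to isolate is precisely the $(2,2)$-entry map and then routing finiteness through Theorem~\ref{auslander}; once this setup is in place, the rest of the proof is an assembly of the normal-form lemmas for $G_2$ together with the classification of finite subgroups of $\mathbb{S}^3$.
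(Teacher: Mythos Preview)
Your argument is correct within the paper's framework and arrives at exactly the same subgroup $\Gamma_0$ (the kernel of the $(2,2)$-entry homomorphism), but it takes a noticeably shorter route than the paper's proof. The paper first invests work in showing directly that every $(2,2)$-entry $d$ occurring in $\Gamma$ is a root of unity: it treats central elements separately (conjugating by a suitable $M\in G_2$ and invoking Lemma~\ref{base} to force $d=1$) and then applies the substantial Lemma~\ref{nonabel} to non-central generators. Only after this does it define $\varphi:\Gamma\to\mathbb{S}^3$ and appeal to Theorem~\ref{auslander} (via the inclusion $\widetilde\Gamma\subseteq\Gamma_0$) for finiteness of the quotient. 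You reverse the logic: you invoke Theorem~\ref{auslander} at the outset to get $h(\Gamma)$ finite, read off that $d(\Gamma)$ is finite, and then note that a finite subgroup of $\H^\ast$ must sit in $\mathbb{S}^3$. This bypasses both Lemma~\ref{nonabel} and the central-element discussion entirely.

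What each approach buys: your route is more economical and makes transparent that, once Theorem~\ref{auslander} is granted in the form stated, the roots-of-unity analysis is not needed for the theorem itself. The paper's route, on the other hand, extracts independent structural information (each $d_i$ is a root of unity, established dynamically through the orbit analysis in Lemma~\ref{nonabel}) before ever appealing to Bieberbach-type input; this is more robust if one is cautious about the precise scope of Theorem~\ref{auslander}, since your argument places its entire weight on that theorem holding for arbitrary affine holonomy in $GL(4,\C)$ rather than just unitary holonomy.
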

\begin{proof}
We can assume from the previous results that, up to conjugation, $\Gamma$ is contained in $G_2$. Suppose first that $\Gamma$ contains a central element $A$ of the form 
$ \begin{pmatrix}
1& b&r\\
0&d&s\\
0&0&1\\
\end{pmatrix}$ with $d\neq 1$. Conjugate $\Gamma$ by
$$ M=\begin{pmatrix}
1& b(d-1)^{-1}&0\\
0&1&(1-d)^{-1}s\\
0&0&1\\
\end{pmatrix}\in G_2$$
 then $M^{-1}AM= \begin{pmatrix}
1& 0&r'\\
0&d&0\\
0&0&1\\
\end{pmatrix}$.
Since this element is central in $M^{-1}\Gamma M$, all its elements of are of the form $\begin{pmatrix}
1& 0&u\\
0&h&0\\
0&0&1\\
\end{pmatrix}$. And this is in contradiction with Lemma \ref{base} since the translational parts of $\widetilde{\Gamma}$ form a basis for $\H^2$. Thus, for any central element in $\Gamma$, $d=1$.
Since $\Gamma$ is the fundamental group of the compact manifold $\H^2/\Gamma$, it is finitely generated. Let 
\[A_i= \begin{pmatrix}
1& b_i&r_i\\
0&d_i&s_i\\
0&0&1\\
\end{pmatrix}, \quad \text{for $1\leq i\leq k,$}\]  be the set of generators of $\Gamma$. 
If $A_i$ is central, $d_i=1$. If $A_i$ is not central by Lemma \ref{nonabel} we have that $d_i$ is a  root of unity. 
Consider then the homomorphism $\varphi:\Gamma\rightarrow \mathbb{S}^3$
defined as 
\[\varphi(A_i)=\varphi \begin{pmatrix}
1& b_i&r_i\\
0&d_i&s_i\\
0&0&1\\
\end{pmatrix}=d_i.\]
Let $\Gamma_0$ denote the kernel of $\varphi$. Then $\Gamma_0$ is normal and unipotent, and $\Gamma/\Gamma_0$ is isomorphic to a subgroup of $\mathbb{S}^3$. 
Now recall that if $\widetilde{\Gamma}$ denotes the subgroup of pure translation in $\Gamma$, by Theorem \ref{auslander}, $\Gamma/\widetilde{\Gamma}$ is finite. Taking into account that $\widetilde{\Gamma}\subseteq \Gamma_0$, we conclude that $\Gamma/\Gamma_0 \subseteq \Gamma/\widetilde{\Gamma}$ is isomorphic to a finite subgroup of $\mathbb{S}^3$.
\end{proof}
Let us give an explicit example of affine quaternionic manifold of quaternionic dimension $2$.
\begin{example}
 Consider the following transformations in ${\it Aff}(2,\mathbb{H})$:
\begin{align*}
& A = \left(\begin{matrix} 1 & 0 & 0\\0 & 1 & 1 \\0 & 0 & 1
\end{matrix}\right),\quad
  B =\left(\begin{matrix} 1 & -1 & 0\\0 & 1 & I \\0 & 0 & 1
\end{matrix}\right),\quad
 C =\left(\begin{matrix} 1 & 0 & 1\\0 & 1 & 0 \\0& 0 & 1
\end{matrix}\right), \\
& D=\left(\begin{matrix} 1 & 0 & J\\0 & 1 & 0 \\0 & 0 & 1
\end{matrix}\right),\quad
S=\left(\begin{matrix} 1 & 0 & \frac J 2\\0 & -1 & I \\ 0 & 0 & 1
\end{matrix}\right),
\end{align*}
where $I,J,K,L\in\mathbb{S}$ are imaginary units.
If $\Gamma_0=\langle A,B,C,D\rangle$ and $\Gamma=\langle A,B,C,D,S\rangle$, then $\H^2/\Gamma_0$ and $\H^2/\Gamma$ are affine quaternionic manifolds. In particular, they
are the quaternionic analogs of examples (f) and (b) in Vitter's paper.
\vskip .3cm

\noindent To identify all possible subgroups $\Gamma_0$ appearing in Theorem \ref{finale}, we need to recall here 

\begin{definition} A subgroup $H$ of a Lie group $G$ is called \emph{uniform} if $G/\overline H$ is compact ($\overline{H}$ is the closure of $H$).
\end{definition}
\noindent The following results, used in the sequel, concern nilpotent Lie groups and nilmanifolds.
 
\begin{theorem}\cite[Theorem 7.1]{Goldman}.\label{Goldman}
Let $M$ be a compact complete affine manifold with nilpotent fundamental group. Then $M$ is an affine nilmanifold. 
\end{theorem}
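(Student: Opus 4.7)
The plan is to construct a connected, simply connected, nilpotent Lie subgroup $N$ of ${\it Aff}(n,\H)$ containing $\Gamma=\pi_1(M)$ as a uniform lattice and acting simply transitively on the developing space $\H^n$; this automatically identifies $M$ with the nilmanifold $N/\Gamma$ carrying its left-invariant affine structure. Completeness together with compactness of $M$ gives, via the developing map, a diffeomorphism of the universal cover with $\H^n$ and realizes $M$ as $\H^n/\Gamma$ with $\Gamma$ acting freely, properly discontinuously and cocompactly by affine transformations. Being the fundamental group of a compact manifold, $\Gamma$ is finitely generated; being discrete and acting freely it is torsion-free; so by the Malcev theorem recalled in the paper, $\Gamma$ embeds as a uniform lattice in a unique abstract connected, simply connected nilpotent Lie group $\widehat N$.

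The first concrete step is to realize $\widehat N$ inside ${\it Aff}(n,\H)$ in a way that extends the inclusion of $\Gamma$. I would consider the real Zariski closure $\overline{\Gamma}^Z$ of $\Gamma$ inside ${\it Aff}(n,\H)\subseteq {\it Aff}(4n,\mathbb R)$: since $\Gamma$ is nilpotent, $\overline{\Gamma}^Z$ is a nilpotent real algebraic group, and inside the identity component of its real points one may extract the \emph{syndetic hull} $N$ of $\Gamma$, a closed connected simply connected nilpotent Lie subgroup containing $\Gamma$ as a uniform lattice. By uniqueness in the Malcev theorem one has $N\cong\widehat N$ abstractly.

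The core step is to prove that $N$ acts simply transitively on $\H^n$. The dimension matches: the compact aspherical quotient $\H^n/\Gamma$ forces the cohomological dimension of $\Gamma$ to equal $4n$, whereas a uniform lattice in a connected simply connected nilpotent Lie group has cohomological dimension equal to the dimension of the group, so $\dim N=4n$. For any $x\in\H^n$ the stabilizer $N_x$ is closed in $N$ and satisfies $N_x\cap\Gamma=\{e\}$ by freeness of $\Gamma$; since $\Gamma$ is cocompact in $N$ this pins down $N_x=\{e\}$, so the orbit $N\cdot x$ is open in $\H^n$ and of full dimension. Closedness of the orbit then follows from combining proper discontinuity of $\Gamma$ on $\H^n$ with cocompactness of $\Gamma$ in $N$ (limit points lie in the union of $N$-translates of a compact fundamental set), so $N\cdot x=\H^n$.

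Once $N$ acts simply transitively, the orbit map $N\to\H^n$, $g\mapsto g\cdot x_0$, is an $N$-equivariant diffeomorphism pulling the flat affine structure of $\H^n$ back to a left-invariant affine structure on $N$, and passing to the quotient yields $M\cong N/\Gamma$, an affine nilmanifold by definition. The main obstacle is the simply-transitive step: extracting $N$ algebraically from the Zariski closure is fairly routine, but proving that the stabilizer $N_x$ is trivial and that the orbit $N\cdot x$ is closed requires a delicate combination of the cocompactness of $\Gamma$ in both $N$ and $\H^n$ with the cohomological-dimension matching $\dim N=4n$.
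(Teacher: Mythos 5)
First, a point of calibration: the paper contains no proof of this statement at all --- it is quoted verbatim from Fried--Goldman--Hirsch \cite{Goldman}, so your attempt can only be measured against that source. Your skeleton (realize $\Gamma$ as a uniform lattice in a connected, simply connected nilpotent subgroup $N$ of the affine group, prove $N$ acts simply transitively, identify $M$ with $N/\Gamma$) is indeed the Fried--Goldman--Hirsch strategy, and several of your steps are sound: completeness gives $M=\H^n/\Gamma$ with $\Gamma$ crystallographic; $\dim N=4n$ via cohomological dimension is right; your closedness argument for the orbit (write $g_i=\gamma_i h_i$ with $h_i$ in a compact fundamental set for $\Gamma$ in $N$ and invoke proper discontinuity) is correct. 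But the step you yourself flag as the core one contains a genuine non sequitur: from $N_x\cap\Gamma=\{e\}$ and cocompactness of $\Gamma$ in $N$ you conclude $N_x=\{e\}$, and that implication is simply false for closed subgroups. In $N=\R^2$ with $\Gamma=\Z^2$, the line $\{(t,\alpha t):t\in\R\}$ with $\alpha$ irrational is a closed connected subgroup meeting the cocompact lattice trivially. Nothing in your setup excludes a positive-dimensional stabilizer this way, and $\dim N_x=0$ is exactly the hard content of the theorem. In Fried--Goldman--Hirsch it is obtained by genuinely different means: one first proves the holonomy is \emph{unipotent}, takes for $N$ the unipotent algebraic hull $U$ of $\Gamma$, and uses the Kostant--Rosenlicht theorem that orbits of unipotent algebraic groups are (Zariski-)closed; the closed orbit $U\cdot x\cong\R^{\dim U-\dim U_x}$ is $\Gamma$-invariant with compact quotient (a closed subset of the compact $M$), so the cohomological-dimension count you invoke applies \emph{on the orbit} and forces $\dim U_x=0$, whence $U_x=\{e\}$ since a zero-dimensional unipotent algebraic group is trivial.

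The same issue undermines the step you call ``fairly routine''. The Zariski closure of a nilpotent subgroup of ${\it Aff}(4n,\R)$ splits as (torus)$\times$(unipotent), and the torus factor --- possibly compact, coming from elliptic parts of the linear holonomy, as for screw motions, whose cyclic groups are discrete, torsion-free and nilpotent but have non-algebraic hulls --- obstructs both the extraction of a syndetic hull with the properties you need and any closed-orbit argument. Proving that the semisimple part acts trivially, i.e.\ that a nilpotent affine crystallographic group is unipotent, is a substantial portion of the Fried--Goldman--Hirsch paper and uses properness and cocompactness in an essential, non-formal way; note that the present paper itself has to work for the analogous unipotence statement in its own setting (the kernel $\Gamma_0$ in Theorem \ref{finale} is shown unipotent only after the long explicit computations of Lemma \ref{nonabel} and its companions in ${\it Aff}(2,\H)$). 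So your proposal correctly locates the difficulty but does not resolve it: the stabilizer-triviality deduction fails as stated, and the unipotence of the holonomy, which is what actually powers the correct argument, is nowhere established.
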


\begin{theorem}\cite[Corollary, p. 293]{Mal}.\label{Malcoro}
Nilmanifolds having isomorphic fundamental groups are isomorphic.
\end{theorem}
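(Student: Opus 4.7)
The plan is to extend any abstract group isomorphism between the fundamental groups to a Lie group isomorphism of the ambient simply connected nilpotent Lie groups, and then pass to the quotient. Concretely, suppose $M_i = N_i/\Gamma_i$ ($i=1,2$) are nilmanifolds and $\phi:\Gamma_1\to\Gamma_2$ is a group isomorphism. I want to produce a Lie group isomorphism $\tilde\phi:N_1\to N_2$ satisfying $\tilde\phi|_{\Gamma_1}=\phi$; once I have it, $\tilde\phi$ descends to a diffeomorphism $M_1\to M_2$ because $\tilde\phi(\Gamma_1)=\Gamma_2$.

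First I would recall the structural description of a simply connected nilpotent Lie group $N$: the exponential map $\exp:\mathfrak{n}\to N$ is a diffeomorphism, and because $\mathfrak{n}$ is nilpotent the Baker--Campbell--Hausdorff series terminates, so the group law on $N$ is polynomial in the Lie bracket. Hence $N$ is determined, as a Lie group, by the real nilpotent Lie algebra $\mathfrak{n}$, and any Lie algebra homomorphism $\mathfrak{n}_1\to\mathfrak{n}_2$ integrates to a unique Lie group homomorphism $N_1\to N_2$.

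The crucial step is to recover $\mathfrak{n}$ from the uniform lattice $\Gamma\subset N$ functorially. This is the Malcev completion: in a torsion--free finitely generated nilpotent group $\Gamma$, $n$-th roots are unique when they exist, so one can form the divisible hull $\Gamma\otimes\mathbb{Q}$ as a group, and then $\Gamma\otimes\mathbb{R}$ is naturally identified with $N$ via $\exp$ (the cocompactness of $\Gamma$ ensures that $\Gamma$ spans $\mathfrak{n}$ over $\mathbb{R}$ after taking logarithms on a Malcev basis). The functoriality statement, which I would invoke as the core of Malcev's theorem, is that every group homomorphism between two such lattices extends uniquely to a $\mathbb{Q}$-linear, and then $\mathbb{R}$-linear, homomorphism of the corresponding Malcev completions, and therefore to a Lie algebra homomorphism between the associated Lie algebras. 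Applying this to the isomorphism $\phi$ yields a Lie group isomorphism $\tilde\phi:N_1\to N_2$ extending $\phi$, and the construction of the quotient diffeomorphism $M_1\to M_2$ is then immediate.

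The main obstacle is precisely the functoriality of the Malcev completion, i.e.\ verifying that the real Lie group structure on $N$ is a complete invariant of the abstract lattice $\Gamma$. This rests on two somewhat delicate ingredients that I would cite from \cite{Mal}: the existence and uniqueness of rational roots in finitely generated torsion-free nilpotent groups (which makes $\Gamma\otimes\mathbb{Q}$ canonically defined), and the fact that a Malcev basis of $\Gamma$ becomes, after $\log$, a real basis of $\mathfrak{n}$ in which the bracket has rational structure constants determined by the commutator relations in $\Gamma$. Once these are granted, the extension of $\phi$ to $\tilde\phi$ and the passage to the quotient are formal.
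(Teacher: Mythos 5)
The paper states this theorem without proof, as a citation of Malcev's corollary, so the relevant comparison is with Malcev's own argument --- and your proposal is precisely that argument: uniqueness of roots in a finitely generated torsion-free nilpotent group gives the rational completion $\Gamma\otimes\mathbb{Q}$, the $\log$ of a Malcev basis gives a rational structure on $\mathfrak{n}$, functoriality extends $\phi$ to a Lie group isomorphism $\tilde\phi\colon N_1\to N_2$, and $\tilde\phi$ descends to the quotients. Your proof is correct and takes essentially the same route as the cited source.
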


\begin{theorem} \cite[Theorem 6, p. 296]{Mal}.
\label{malcev} A group $\Lambda$ is isomorphic to a uniform, discrete subgroup  in a connected, simply connected, nilpotent Lie group if, and only if,
\begin{enumerate}
\item $\Lambda $ is finitely generated;
\item $\Lambda$ is nilpotent;
\item $\Lambda$ has no torsion.
\end{enumerate} 
\end{theorem}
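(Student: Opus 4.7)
The plan is to treat the two directions separately: necessity is a short consequence of the structure of simply connected nilpotent Lie groups, while sufficiency requires building the ambient Lie group $N$ as the real Malcev completion of $\Lambda$.

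For the necessity direction, suppose $\Lambda$ sits as a uniform discrete subgroup of a connected simply connected nilpotent Lie group $N$ with Lie algebra $\gn$. Because $N$ is nilpotent of some class $c$, the Baker--Campbell--Hausdorff (BCH) series terminates, so $\exp\colon \gn \to N$ is a diffeomorphism and the group law on $N$ pulls back to a polynomial operation on $\gn$. Torsion-freeness is immediate: if $\gamma^k=e$ then $k\log\gamma=0$, whence $\log\gamma=0$. Nilpotency of $\Lambda$ as an abstract group is inherited by intersecting its lower central series with that of $N$. Finite generation follows from the compactness of $N/\Lambda$ via a \v{S}varc--Milnor argument: choosing a compact neighborhood $K$ of the identity whose $\Lambda$-translates cover $N$, the finite set $\{\lambda\in\Lambda : K\cap\lambda K\neq\emptyset\}$ generates $\Lambda$.

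For sufficiency, assume $\Lambda$ is finitely generated, torsion-free, and nilpotent of class $c$. The plan is to construct the rational Malcev completion $\Lambda_{\mathbb{Q}}$ by inductively adjoining $n$-th roots along the descending central series. Each successive quotient is a finitely generated torsion-free abelian group, and divisibility is achieved by tensoring with $\mathbb{Q}$; the non-commutativity is handled by induction on $c$, exploiting the vanishing of sufficiently iterated commutators. The resulting $\Lambda_{\mathbb{Q}}$ carries a canonical $\mathbb{Q}$-Lie algebra structure $\gn_{\mathbb{Q}}$ via the inverse BCH formula. Setting $\gn:=\gn_{\mathbb{Q}}\otimes_{\mathbb{Q}}\mathbb{R}$ and letting $N:=(\gn,*)$, with $*$ the BCH product (polynomial by nilpotency), yields a connected, simply connected, nilpotent real Lie group into which $\Lambda\hookrightarrow\Lambda_{\mathbb{Q}}\subset\gn_{\mathbb{Q}}\subset\gn=N$ embeds.

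The main obstacle will be to verify that this embedding is simultaneously discrete and cocompact. The key tool is a \emph{Malcev basis}: an ordered tuple $\gamma_1,\ldots,\gamma_d\in\Lambda$ refining the lower central series such that every $\lambda\in\Lambda$ admits a unique expression $\lambda=\gamma_1^{n_1}\cdots\gamma_d^{n_d}$ with $n_i\in\mathbb{Z}$. Via $\log$, these $\gamma_i$ furnish an $\mathbb{R}$-basis of $\gn$ in which $\Lambda$ corresponds to the integer lattice with respect to BCH-polynomial coordinates, establishing discreteness. For cocompactness, the same coordinates exhibit the compact set $\{\gamma_1^{t_1}\cdots\gamma_d^{t_d} : t_i\in[0,1]\}$ as a fundamental domain modulo $\Lambda$. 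The delicate point throughout is that BCH is non-commutative, so the existence of the Malcev basis and the polynomial nature of the coordinate changes must be established inductively along the central series, keeping track at every stage that integrality and the dimension count (Hirsch length of $\Lambda$ equals $\dim_{\mathbb{R}}\gn$) are preserved.
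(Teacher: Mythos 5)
The paper does not actually prove this statement: it is imported verbatim from Malcev's 1951 paper (\cite[Theorem 6, p.~296]{Mal}), so there is no in-paper argument to compare yours against. Your sketch reconstructs the classical proof of Malcev's theorem, and the overall architecture is the correct one. The necessity direction as you present it is essentially complete: torsion-freeness from bijectivity of $\exp$ together with $\gamma^k=\exp(k\log\gamma)$; nilpotency because any subgroup of a nilpotent group is nilpotent (your intersection with the lower central series of $N$ is a roundabout way of saying $\gamma_i(\Lambda)\subseteq\gamma_i(N)$); and finite generation by the \v{S}varc--Milnor covering argument, which correctly uses discreteness of $\Lambda$ to make the set $\{\lambda\in\Lambda : K\cap\lambda K\neq\emptyset\}$ finite and connectedness of $N$ to make it generate.

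There are, however, two issues on the sufficiency side. First, a genuine technical slip: for a finitely generated torsion-free nilpotent group the lower central quotients $\gamma_i(\Lambda)/\gamma_{i+1}(\Lambda)$ need \emph{not} be torsion-free. For instance the group $\langle x,y,z \mid z \text{ central},\ [x,y]=z^2\rangle$, which is exactly the lattice $\Lambda_r$ with $r=(2)$ in $H_1(1)$ in the paper's own notation, is torsion-free, yet its abelianization is $\mathbb{Z}^2\oplus\mathbb{Z}/2\mathbb{Z}$. This does no harm to the construction of $\Lambda_{\mathbb{Q}}$, since tensoring with $\mathbb{Q}$ kills the torsion, but it does affect your Malcev basis: a tuple $\gamma_1,\ldots,\gamma_d$ giving unique expressions $\lambda=\gamma_1^{n_1}\cdots\gamma_d^{n_d}$ with $n_i\in\mathbb{Z}$ must refine the \emph{isolated} lower central series $\sqrt{\gamma_i(\Lambda)}$ (equivalently, one may use the upper central series, whose factors are torsion-free by a separate lemma of Malcev); refining the plain lower central series can produce finite cyclic factors and destroys uniqueness of integer exponents. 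Second, the steps you explicitly defer are precisely the substance of the theorem: existence and, crucially, uniqueness of $n$-th roots in a torsion-free nilpotent group (a nontrivial lemma needed for $\Lambda_{\mathbb{Q}}$ to be well defined), the verification that the inverse BCH series endows the divisible completion with a genuine $\mathbb{Q}$-Lie algebra structure, and the rational polynomiality of the coordinate changes that yields discreteness and the compact fundamental domain. As a roadmap your proposal is faithful to Malcev's original argument (as presented, e.g., in Raghunathan's book on discrete subgroups of Lie groups); as a proof, it stops where the real work begins.
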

\noindent Now, if the subgroup $\Gamma_0\subseteq \Gamma\subseteq {\it Aff}(2,\H)$ is, as in Theorem \ref{finale}, finitely generated, without torsion elements and unipotent - and thus nilpotent - then it is isomorphic via a map $i$ to a uniform, discrete (closed) subgroup $i(\Gamma_0)$ of a unique connected, simply connected, nilpotent Lie group $N$.  
Now the fact that  $\Gamma/\Gamma_0$ is finite and $\H^2/\Gamma_0$ is compact forces $N$ to have real dimension $8$.  Since $\Gamma_0$ is nilpotent, then Theorem \ref{Goldman} implies that $\H^2/\Gamma_0$  is an affine nilmanifold. On the other hand, the nilmanifolds  $\H^2/\Gamma_0$ and $N/i(\Gamma_0)$ having isomorphic fundamental groups, are isomorphic thanks to Theorem \ref{Malcoro}. As a consequence, the quotient $N/i(\Gamma_0)$ is an affine quaternionic manifold; thus it is hypercomplex, and the same holds for its covering $N$ (see Remark \ref{hypercomplex}). 
Summarizing, 

\begin{corollary} In the hypotheses of Theorem \ref{finale}, the unipotent subgroup $\Gamma_0$ of $\Gamma$ turns out to be isomorphic to a uniform, discrete subgroup $i(\Gamma_0)$ of a suitable nilpotent, hypercomplex, connected, simply connected, $8$-dimensional Lie group $N$. Moreover $N/i(\Gamma_0)$ is affine quaternionic, and the same holds for $N$.
\end{corollary}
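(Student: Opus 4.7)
The plan is to apply Malcev's structure theorem (Theorem \ref{malcev}) to produce the ambient Lie group $N$, use Goldman's nilmanifold theorem (Theorem \ref{Goldman}) together with Malcev's rigidity (Theorem \ref{Malcoro}) to identify $\H^2/\Gamma_0$ with $N/i(\Gamma_0)$, and finally transport the affine quaternionic structure through this identification, lifting it to $N$ and invoking Remark \ref{hypercomplex} to obtain hypercomplexity.

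First I would verify the three hypotheses of Theorem \ref{malcev} for $\Gamma_0$. The group $\Gamma$ is finitely generated as the fundamental group of the compact manifold $\H^2/\Gamma$, and hence so is its finite-index subgroup $\Gamma_0$; since $\Gamma_0$ is unipotent by Theorem \ref{finale}, it is nilpotent; and it is torsion-free because any nontrivial finite-order element of $\Gamma_0\subseteq\Gamma$ would fix a point in $\H^2$ (an affine transformation of finite order on an $\R$-vector space always has a fixed point, obtained by averaging an orbit), contradicting freeness of the $\Gamma$-action. Malcev's theorem then supplies a unique connected, simply connected nilpotent Lie group $N$ together with an embedding $i:\Gamma_0\hookrightarrow N$ onto a uniform, discrete subgroup. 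For the dimension, the finite-index inclusion $\Gamma_0\subseteq\Gamma$ makes $\H^2/\Gamma_0$ a finite cover of the compact $\H^2/\Gamma$, hence compact of real dimension $8$; once identified with $N/i(\Gamma_0)$ in the next step, this forces $\dim_{\R} N=8$.

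Next, since $\H^2/\Gamma_0$ is a compact, complete, affine manifold whose fundamental group $\Gamma_0$ is nilpotent, Goldman's Theorem \ref{Goldman} tells us that $\H^2/\Gamma_0$ is an affine nilmanifold. Thus $\H^2/\Gamma_0$ and $N/i(\Gamma_0)$ are both nilmanifolds with fundamental group isomorphic to $\Gamma_0$, and Malcev's rigidity (Theorem \ref{Malcoro}) yields an isomorphism $\H^2/\Gamma_0 \cong N/i(\Gamma_0)$. Pulling back the affine quaternionic atlas along this isomorphism endows $N/i(\Gamma_0)$ with an affine quaternionic structure, and lifting to the universal cover $N \to N/i(\Gamma_0)$ (whose deck group is the discrete subgroup $i(\Gamma_0)$) gives an affine quaternionic structure on $N$ itself. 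Remark \ref{hypercomplex} then delivers hypercomplexity for both $N/i(\Gamma_0)$ and $N$, completing the proof.

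The subtlest point is ensuring that the isomorphism of nilmanifolds produced by Theorem \ref{Malcoro} is compatible with the affine quaternionic data, rather than being a mere diffeomorphism of smooth manifolds. I would handle this either by transporting the affine atlas directly, checking on fundamental domains that the transition functions remain quaternionic affine after composition with the diffeomorphism, or by invoking uniqueness of the torsion-free Obata connection on an affine quaternionic manifold to identify the pulled-back connection on $N$ with the canonical left-invariant flat one; either route makes the transfer of the affine quaternionic structure canonical and unambiguous.
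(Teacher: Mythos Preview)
Your proposal is correct and follows essentially the same route as the paper: verify Malcev's hypotheses for $\Gamma_0$, apply Theorem~\ref{malcev} to obtain $N$, invoke Theorem~\ref{Goldman} and Theorem~\ref{Malcoro} to identify $\H^2/\Gamma_0$ with $N/i(\Gamma_0)$, then transport the affine quaternionic structure and use Remark~\ref{hypercomplex}. You supply more detail than the paper (the torsion-freeness argument via averaging, and the careful discussion of compatibility of the nilmanifold isomorphism with the affine quaternionic data), but the skeleton is identical.
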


\noindent Dotti and Fino, in  \cite{DF}, give a classification of all possible hypercomplex, simply connected, nilpotent lie groups of real dimension 8.  In this classification one can find, in particular, those $N$ that carry an affine quaternionic structure. Indeed, all hypercomplex, simply connected, nilpotent lie groups of real dimension 8 carry such a structure, as stated in 
\begin{theorem}\cite[Proposition 2.1]{DF2}.
Every hypercomplex structure on a $8$-dimensional, nilpotent Lie group is flat and hence affine quaternionic.
\end{theorem}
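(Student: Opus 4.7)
The plan is to reduce the statement to a purely Lie-algebraic problem on $\mathfrak n = \mathrm{Lie}(N)$, then exploit the nilpotency and the dimension restriction. First I would observe that a hypercomplex structure on a Lie group $N$ that is compatible with the group structure is left-invariant, hence completely determined by a triple $(J_1,J_2,J_3)$ of anticommuting endomorphisms of $\mathfrak n$ satisfying $J_1J_2=J_3$, each of which is integrable in the Newlander--Nirenberg sense. Because the Obata connection $\na$ is the \emph{unique} torsion-free connection preserving the almost quaternionic structure (as recalled in the Proposition before Remark \ref{hypercomplex}), it must be invariant under left translations, and so it is the extension of a bilinear map $\na\colon \mathfrak n\times\mathfrak n\to\mathfrak n$. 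Flatness of $\na$ then amounts to the purely algebraic identity $\na_X\na_Y Z-\na_Y\na_X Z-\na_{[X,Y]}Z=0$ for all $X,Y,Z\in\mathfrak n$.

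Next I would describe $\na$ explicitly on the Lie algebra. Using the conditions $\na J_\alpha=0$ together with vanishing torsion $\na_X Y-\na_Y X=[X,Y]$, one arrives at a formula for $\na_X Y$ as a polynomial in the structure constants and the $J_\alpha$ applied to brackets of the form $[J_\alpha X,Y]$ and $[X,J_\alpha Y]$. The crucial structural input now is the \emph{abelian} property of hypercomplex nilpotent Lie algebras in low dimension: one needs $[J_\alpha X,J_\alpha Y]=[X,Y]$ for $\alpha=1,2,3$. In dimension $8$, the nilpotency degree and the existence of three integrable anticommuting complex structures force this identity; I would either invoke the Dotti--Fino classification of $8$-dimensional hypercomplex nilpotent Lie algebras (a finite list), checking the abelian identity on each entry, or derive it uniformly from the observation that the $J_\alpha$-invariance of the lower central series collapses the bracket on $\mathfrak n$ onto its abelianization.

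Once the abelian hypercomplex identity is in hand, the Obata formula simplifies dramatically. A short calculation shows that $\na_X Y=\tfrac12\bigl([X,Y]-\sum_\alpha J_\alpha[J_\alpha X,Y]\bigr)$ reduces, under the abelian property, to a symmetric bilinear expression in $[X,Y]$ that takes values in the center after one application. Combining this with the nilpotency of $\mathfrak n$, all iterated covariant derivatives appearing in $R(X,Y)Z$ eventually land in a bracket of height exceeding the nilpotency class, and the curvature tensor vanishes term by term. The resulting flat torsion-free connection preserving the hypercomplex structure is, by the uniqueness recalled earlier, the Obata connection itself.

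The main obstacle is the second paragraph: proving the abelian hypercomplex identity. It is precisely where the hypothesis $\dim_\R N=8$ is indispensable, since in higher dimensions nilpotent hypercomplex Lie algebras need not be abelian and the corresponding Obata connections are generally not flat. Without a uniform conceptual reason, one is forced into case-analysis along the Dotti--Fino list of $8$-dimensional hypercomplex nilpotent Lie algebras, which is the technical heart of the argument; the subsequent flatness verification is then essentially a mechanical consequence of abelianity and nilpotency.
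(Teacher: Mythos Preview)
First, a framing remark: the paper does not actually prove this statement. It is quoted verbatim as \cite[Proposition 2.1]{DF2} and used as a black box, so there is no ``paper's own proof'' to compare your argument against.

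That said, your proposal contains a genuine gap at the step you yourself flag as the main obstacle. You assert that in real dimension~$8$ the nilpotency together with the integrability of the three complex structures forces the \emph{abelian} hypercomplex identity $[J_\alpha X,J_\alpha Y]=[X,Y]$, and you propose to verify this either uniformly or by running through the Dotti--Fino classification. This claim is false. The paper itself remarks, immediately before the statement you are proving, that ``$N_3$ does admit also non-Abelian hypercomplex structures,'' where $N_3=\R\times H_3(1)$. Hence the case-check would fail on~$N_3$, and the simplification of the Obata formula that you rely on in the final paragraph is not available for those structures.

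The theorem is nevertheless true, so the repair has to avoid abelianity as an intermediate step. In the source \cite{DF2} the argument proceeds instead by writing the Obata connection explicitly in terms of the Lie bracket and the $J_\alpha$, and then computing the curvature directly (using the classification in \cite{DF} to control the bracket structure) without ever assuming $[J_\alpha X,J_\alpha Y]=[X,Y]$. If you want to salvage your outline, you would need to replace the second paragraph by such a direct curvature computation that is valid for the non-abelian hypercomplex structures on~$N_3$ as well.
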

At this point, the next step would be to find explicitly, and possibly list, all subgroups $\Gamma\subseteq {\it Aff}(2,\H)$ acting freely and properly discontinuously  on $\H^2$, and  such that $\H^2/\Gamma$ is compact. In order to do this, one could exploit the mentioned classification of Dotti and Fino: first identify all uniform discrete subgroups  of a nilpotent, hypercomplex (or equivalently affine quaternionic), simply connected, $8$-dimensional Lie Group $N$, and then restrict to those such that $\Gamma/\Gamma_0$ is one of the finite subgroups of $\mathbb S^3$ listed above. In the next section we apply this procedure to the case of Abelian hypercomplex structures.

\section{The Abelian case}
Let $N$ be a nilpotent, hypercomplex, simply connected, $8$-dimensional Lie group, let $\mathfrak{n}$ denote its Lie algebra, and recall that a complex structure $J$ on $N$ is called {\em Abelian} if $J$ satisfies the condition $[X,Y] = [JX,JY]$ for all $X,Y\in\mathfrak{n}$. We will start by considering here the case in which the group $N$ admits an Abelian hypercomplex structure, i.e., three abelian complex structures.


In order to state the mentioned results by Dotti and Fino, \cite{DF1}, we  first recall the definition of groups of Heisenberg type (for short, type $H$)
classified in \cite{Kaplan}.
The  $(2n + 1)$-dimensional real Heisenberg group admits two realisations: one, as a subgroup of $GL(n + 2, \R)$
\begin{equation}\label{reale}
H(n,\R)=\left\{\left(\begin{matrix} 1 & a & c\\0 & I & b^T \\0 & 0 & 1
\end{matrix}\right):a,b\in \R^n,c\in \R \right\}
\end{equation}
and the other as a subgroup of $GL(n + 2, \C)$
\begin{equation}\label{complessa}
H_1(n)=\left\{\left(\begin{matrix} 1 & z & Im w\\0 & I & -\bar{z}^T \\0 & 0 & 1
\end{matrix}\right):z\in \C^n,w\in \C \right\}
\end{equation}
Using complex coefficients in \eqref{reale} and quaternionic coefficients in \eqref{complessa} one obtains the complex Heisenberg groups of matrices
$$
H_2(n)=H(n,\C)=\left\{\left(\begin{matrix} 1 & \xi & z\\0 & I & \omega^T \\0 & 0 & 1
\end{matrix}\right):\xi,\omega\in \C^n,z\in \C \right\}
$$
and the quaternionic Heisenberg groups of matrices

$$
H_3(n)=\left\{\left(\begin{matrix} 1 & q & Im h\\0 & I & -\bar{q}^T \\0 & 0 & 1
\end{matrix}\right):q\in \H^n,h\in \H \right\}
$$
The three families $H_1(n), H_2(n)$ and $H_3(n)$ so obtained are two step nilpotent groups with centers of dimension $1,2$ and $3$ respectively. The groups $H_1(n)$ and $H_3(n)$ correspond to the nilpotent part in the Iwasawa decomposition of the isometry group of the complex hyperbolic space and quaternionic hyperbolic space respectively.
The groups $H_1(2), H_2(1)$ and $H_3(1)$ are the real, complex and quaternionic Heisenberg groups of dimension $5, 6, 7$,  respectively.

\noindent In \cite{DF1} Dotti and Fino prove that a nilpotent $8$-dimensional simply connected Lie group admitting an invariant Abelian hypercomplex structure is either a real Euclidean space or a trivial extension of a group of type $H$; indeed using the following 
\begin{lemma}Let N be an $8$-dimensional nilpotent Lie group with an Abelian hypercomplex structure. Then $N$ is either Abelian, or $2$-step nilpotent with a $4$-dimensional center.
\end{lemma}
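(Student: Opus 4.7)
My plan is to exploit the fact that, for an \emph{Abelian} integrable complex structure $J$ on $\mathfrak{n}$, the centre $\mathfrak{z}$ of $\mathfrak{n}$ is automatically $J$-stable. Indeed, integrability combined with the Abelian condition $[JX, JY] = [X, Y]$ forces the identity $[JX, Y] = -[X, JY]$ (via vanishing of the Nijenhuis tensor), so for any $X \in \mathfrak{z}$ and any $Y \in \mathfrak{n}$ one has $[JX, Y] = -[X, JY] = 0$, which means $JX \in \mathfrak{z}$.

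\textbf{Main steps.} Applying this observation to two anti-commuting members of the Abelian hypercomplex triple $\{J_1, J_2, J_3\}$, the centre $\mathfrak{z}$ becomes a left module over the quaternion algebra $\mathbb{H}$, and in particular $\dim_\mathbb{R} \mathfrak{z}$ is divisible by $4$. Since $\dim_\mathbb{R} \mathfrak{n} = 8$ and $\mathfrak{n}$ is nilpotent (so $\mathfrak{z} \neq 0$), the only possibilities are $\dim_\mathbb{R} \mathfrak{z} \in \{4, 8\}$. If $\dim_\mathbb{R} \mathfrak{z} = 8$ then $\mathfrak{n}$ is Abelian, so the remaining case is $\dim_\mathbb{R} \mathfrak{z} = 4$, in which I must show $\mathfrak{n}$ is $2$-step, i.e.\ $[\mathfrak{n}, \mathfrak{n}] \subseteq \mathfrak{z}$.

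\textbf{Reduction and conclusion.} To this end I would consider the quotient $\mathfrak{m} = \mathfrak{n}/\mathfrak{z}$, a $4$-dimensional nilpotent Lie algebra. Since $\mathfrak{z}$ is $J_a$-stable for each $a$, the three Abelian complex structures descend to $\mathfrak{m}$, and the Abelian condition clearly passes to the quotient; hence $\mathfrak{m}$ carries an induced Abelian hypercomplex structure. Re-running the centre-invariance argument inside $\mathfrak{m}$, its centre $\mathfrak{z}(\mathfrak{m})$ is again a quaternionic subspace of the $4$-dimensional space $\mathfrak{m}$, so $\dim_\mathbb{R} \mathfrak{z}(\mathfrak{m}) \in \{0, 4\}$. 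Nilpotency of $\mathfrak{m}$ rules out $0$, forcing $\mathfrak{z}(\mathfrak{m}) = \mathfrak{m}$, i.e.\ $\mathfrak{m}$ is Abelian. This is exactly the condition $[\mathfrak{n}, \mathfrak{n}] \subseteq \mathfrak{z}$, yielding the second alternative of the lemma, with $\dim_\mathbb{R} \mathfrak{z} = 4$.

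\textbf{Main obstacle.} There is no computationally hard step: everything reduces to the Lie-algebraic observation that Abelianness plus centrality implies $J$-invariance of the centre, together with a dimension count relying only on the quaternionic constraint $J_a J_b = -J_b J_a$ for $a \neq b$. The one subtlety worth flagging is the passage to the quotient $\mathfrak{n}/\mathfrak{z}$, which requires that $\mathfrak{z}$ be invariant under the whole hypercomplex structure; but this is precisely what the first step provides, so the argument iterates cleanly in the single dimension we need.
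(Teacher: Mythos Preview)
The paper does not give its own proof of this lemma; it quotes the statement from Dotti--Fino \cite{DF1} and uses it as input for the subsequent classification. So there is no in-paper argument to compare against.

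That said, your proposal is correct and is essentially the standard argument. One minor simplification: you do not need to invoke the Nijenhuis tensor separately. The Abelian identity $[JX,JY]=[X,Y]$ alone, applied with $JX$ in place of $X$, already yields $[J(JX),JY]=[JX,Y]$, i.e.\ $[JX,Y]=-[X,JY]$; integrability is then a consequence rather than an extra hypothesis. With that, your chain of deductions---$J_a$-invariance of $\mathfrak z$, hence $\dim_{\mathbb R}\mathfrak z\in\{4,8\}$ by the quaternionic constraint, and the quotient step forcing $\mathfrak n/\mathfrak z$ to be Abelian---goes through cleanly and matches the approach in \cite{DF1}.
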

\noindent they thus obtain
\begin{theorem}\cite[Theorem 4.1]{DF1} Let $N$ be an $8$-dimensional nilpotent Lie group. Then $N$ carries an Abelian hypercomplex structure if and only if $N$ is either Abelian or isomorphic to a  trivial extensions of a group of type $H$ with center of dimension $i = 1, 2$ or $3$, i.e., respectively, to $N_1 = \R^3 \times H_1(2)$,
$N_2 = \R^2 \times H_2(1)$ or $N_3 = \R \times H_3 (1)$.
\end{theorem}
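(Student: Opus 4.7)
The plan splits along the two implications. For the \emph{if} direction, one verifies on each $N_i$ explicitly that an Abelian hypercomplex structure exists: the horizontal distribution (complementary to the centre) carries a natural $\H$-vector-space structure, and the three complex structures $J_\alpha$ are defined as left multiplication by $\bi,\bj,\bk$; the Abelian identity $[J_\alpha X,J_\alpha Y]=[X,Y]$ then follows from the explicit expression of the bracket in each $H$-type group, which commutes with $J_\alpha\ot J_\alpha$ by construction.

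For the \emph{only if} direction, assume $N$ is non-abelian, so that the preceding lemma gives a $2$-step nilpotent Lie algebra $\gn$ of real dimension $8$ with centre $\gz$ of dimension $4$. A short computation using the Abelian condition shows $J_\alpha(\gz)\subseteq\gz$, so the hypercomplex structure descends to hypercomplex structures on both $\gz$ and the quotient $V=\gn/\gz$, identifying each with $\H$ under the standard $\Sp(1)$-action. The Lie bracket induces a linear map $\beta\colon\Lambda^2 V\to\gz$ with $\mathrm{Im}\,\beta=[\gn,\gn]$, and the Abelian identity becomes $\beta\circ(J_\alpha\ot J_\alpha)=\beta$ for every $\alpha$.

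The crucial step is representation-theoretic. Under the natural $\Sp(1)$-action on $\Lambda^2 V\cong\R^6$ induced by the $J_\alpha\ot J_\alpha$, there is a common invariant splitting $\Lambda^2 V=\Lambda^2_+V\op\Lambda^2_-V$ with each summand of dimension three; every $J_\alpha\ot J_\alpha$ acts as the identity on $\Lambda^2_-V$ and generates a faithful $\SO(3)$-representation on $\Lambda^2_+V$, whose three $(-1)$-eigenplanes are pairwise transverse and together span $\Lambda^2_+V$. Since $\beta$ vanishes on every $(-1)$-eigenspace, it vanishes on all of $\Lambda^2_+V$ and hence factors through $\Lambda^2_-V$. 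In particular $k:=\dim[\gn,\gn]\leq 3$.

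It remains to identify $\gn$ for each admissible $k\in\{1,2,3\}$. Choosing a vector-space complement $\gz_0\subseteq\gz$ to $[\gn,\gn]$ yields, since $[\gn,\gn]$ is central and $\gz_0$ is an abelian ideal, a Lie algebra direct decomposition $\gn\cong\gz_0\op\gh_k$ with $\gz_0\cong\R^{4-k}$ and $\gh_k$ a $2$-step nilpotent Lie algebra of dimension $4+k$ whose centre coincides with its commutator subalgebra of dimension $k$. The main obstacle is then a uniqueness statement: up to Lie algebra isomorphism there is exactly one such $\gh_k$ compatible with an Abelian hypercomplex structure on $\gn$, and this $\gh_k$ is the Lie algebra of $H_1(2)$, $H_2(1)$ or $H_3(1)$ according as $k=1,2,3$. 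This final matching reduces to classifying, up to the action of $\GL(V)\times\GL(\R^k)$, rank-$k$ linear maps $\bar\beta\colon\Lambda^2_-V\to\R^k$, which is a case-by-case check.
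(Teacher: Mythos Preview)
The paper does not contain a proof of this statement: it is quoted verbatim as \cite[Theorem 4.1]{DF1} and used as a black box, with only the preparatory lemma (also quoted without proof) stated beforehand. There is therefore no ``paper's own proof'' to compare against.

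That said, your sketch follows essentially the strategy of the original Dotti--Fino argument and is sound in outline. Two points deserve tightening. First, the phrase ``generates a faithful $\SO(3)$-representation on $\Lambda^2_+V$'' is a little loose: the three involutions $J_\alpha\otimes J_\alpha$ themselves generate only a Klein four-group on $\Lambda^2 V$, each acting on $\Lambda^2_+V$ with a one-dimensional $(+1)$-eigenspace and a two-dimensional $(-1)$-eigenspace; the $\SO(3)$ you have in mind comes from the full $\Sp(1)$-action, not from the $J_\alpha\otimes J_\alpha$ alone. The conclusion that the three $(-1)$-eigenplanes span $\Lambda^2_+V$ is nonetheless correct and is what you actually need. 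Second, the final ``case-by-case check'' hides a genuine issue: Lie-algebra isomorphisms of $\gh_k$ correspond to the full $\GL(V)\times\GL(\R^k)$-action on surjections $\Lambda^2V\to\R^k$, and $\GL(V)$ does \emph{not} preserve the splitting $\Lambda^2_+\oplus\Lambda^2_-$, so one cannot simply classify maps out of $\Lambda^2_-V$. For $k=1$ and $k=3$ this is harmless (any nonzero element of $\Lambda^2_-V$ is a nondegenerate $2$-form, and for $k=3$ the map is an isomorphism), but for $k=2$ you must still argue that all $4$-dimensional kernels of the form $\Lambda^2_+V\oplus(\text{line in }\Lambda^2_-V)$ lie in a single $\GL(V)$-orbit, which requires a short additional computation.
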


\noindent The groups $N_1$, $N_2$ can carry only Abelian hypercomplex structures since their commutator subgroups  are $1$-dimensional and $2$-dimensional respectively. On the other hand $N_3$ does admit also non-Abelian hypercomplex structures. 

We will now turn our attention to find all uniform, discrete subgroups $L$ of $N_1$.  
In this case, the uniform discrete subgroups can be completely classified using  a result in \cite{GW}. 

Indeed,
for $x,y$ row vectors in $\mathbb{R}^n,$ let $$\lambda(x,y,t)=\left\{\left(\begin{matrix} 1 & x& t\\0 & I & y^T\\0 & 0 & 1
\end{matrix}\right) \right\}
$$
\begin{definition} \label{discreti} For $r=(r_1,r_2,\ldots,r_n)\in {(\mathbb{Z}^+)}^n$ such that $r_j$ divides $r_{j+1}$ with $1\leq j\leq n$, let $r{\mathbb{Z}}^n$ denote the $n$-uples $x=(x_1,x_2,\ldots,x_n)$ for which $x_i\in r_i\mathbb{Z}$ with  $1\leq i\leq n$. Define $$\Lambda_r=\{\lambda(x,y,t):\;\;x\in r{\mathbb{Z}}^n,y\in{\mathbb{Z}}^n,t\in\mathbb{Z} \}$$
\end{definition} \noindent It follows easily that  $\Lambda_r$ is  a uniform discrete subgroup of $H_1(n).$
With this notation it is proved that
\begin{theorem} \cite[Theorem 2.4]{GW}. The subgroups $\Lambda_r$ classify the uniform discrete subgroups of $H_1(n)$ up to automorphism. That is, if $\Lambda$ is any  uniform discrete subgroup of $H_1(n)$ then there exists a unique $r$ for which some automorphism of $H_1(n)$ maps $\Lambda$ to $\Lambda_r$. Moreover for all $r,s$ as in Definition \ref{discreti}, $\Lambda_r$ and $\Lambda_s$ are isomorphic if and only if $r=s$.
\end{theorem}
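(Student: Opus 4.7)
The plan is to reduce the classification of uniform discrete subgroups of $H_1(n)$ to the classification of integer alternating forms, then exploit the abundance of automorphisms of $H_1(n)$ to convert that data into a canonical $r$.

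First I would extract invariants from $\Lambda$ intrinsically. The center $Z$ of $H_1(n)$ is one-dimensional and is also the commutator subgroup, so $\Lambda\cap Z$ is a discrete cocompact subgroup of $Z\cong\mathbb R$; hence $\Lambda\cap Z=\mathbb Z\cdot\lambda(0,0,c)$ for a unique $c>0$. The projection $\pi\colon H_1(n)\to H_1(n)/Z\cong\mathbb R^{2n}$ then sends $\Lambda$ to a full lattice $L\subset\mathbb R^{2n}$, because $\Lambda$ is uniform. Taking commutators in $H_1(n)$ gives $[\lambda(x,y,t),\lambda(x',y',t')]=\lambda(0,0,\,xy'^{T}-yx'^{T})$, which depends only on the images in the abelianization. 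Dividing by $c$ produces a nondegenerate alternating form $B\colon L\times L\to\mathbb Z$; nondegeneracy follows from the fact that $[\Lambda,\Lambda]$ must generate $\Lambda\cap Z$, otherwise $\Lambda$ would fail to be cocompact in $H_1(n)$.

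Second I would normalize $(L,B)$. By the elementary divisor theorem for integer alternating forms over a free $\mathbb Z$-module, there is a basis $e_1,\dots,e_n,f_1,\dots,f_n$ of $L$ and positive integers $r_1\mid r_2\mid\cdots\mid r_n$ such that $B(e_i,f_j)=r_i\delta_{ij}$ and $B(e_i,e_j)=B(f_i,f_j)=0$; this tuple $r=(r_1,\dots,r_n)$ is uniquely determined by the isomorphism class of $(L,B)$. Now I would build an automorphism of $H_1(n)$ carrying $\Lambda$ to $\Lambda_r$: the linear map sending the standard basis of $\mathbb R^{2n}$ to $(e_i,f_i)$ together with the rescaling $t\mapsto t/c$ of the center extends to a Lie automorphism of $H_1(n)$ precisely because the commutator form transforms accordingly; inner automorphisms (translations by elements of $H_1(n)$) then absorb the remaining freedom of choosing lifts $\tilde e_i,\tilde f_i\in\Lambda$ of $e_i,f_i$, bringing $\Lambda$ exactly to $\Lambda_r$.

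Third I would address uniqueness and the abstract-isomorphism statement simultaneously. The point is that both $c$ and $r$ can be recovered from the \emph{abstract} group $\Lambda$: the derived subgroup $[\Lambda,\Lambda]$ sits inside the center, the quotient $\Lambda/Z(\Lambda)$ is a free abelian group of rank $2n$, and the commutator pairing $\Lambda/Z(\Lambda)\times\Lambda/Z(\Lambda)\to[\Lambda,\Lambda]$ is a well-defined alternating form whose elementary divisors with respect to any identification $[\Lambda,\Lambda]\cong\mathbb Z$ are exactly $r_1\mid\cdots\mid r_n$. Thus if an automorphism of $H_1(n)$ sends $\Lambda_r$ to $\Lambda_s$, or even if $\Lambda_r\cong\Lambda_s$ as abstract groups, the tuples must coincide.

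The main obstacle I expect is the second step: checking that one really has enough Lie automorphisms of $H_1(n)$ to realize an arbitrary basis change on $L$ together with a rescaling of the center, and in particular that the invariant $c$ can be absorbed. This amounts to understanding $\mathrm{Aut}(H_1(n))$ concretely (its linear part maps onto the conformal symplectic group $\mathrm{GSp}(2n,\mathbb R)$), and it is where the argument in \cite{GW} does the real work; once that structural input is in hand, the remaining steps are essentially the Smith normal form for integer symplectic lattices.
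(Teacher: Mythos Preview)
The paper does not supply its own proof of this statement: it is quoted verbatim as \cite[Theorem 2.4]{GW} and used as a black box in the discussion of $N_1$. So there is no ``paper's proof'' to compare against; your outline is a reasonable reconstruction of the standard argument (and is, in spirit, the one in Gordon--Wilson), namely: project $\Lambda$ to a full lattice $L$ in $H_1(n)/Z\cong\mathbb R^{2n}$, read the commutator as an integral alternating form on $L$, put that form in Smith normal form to get the divisors $r_1\mid\cdots\mid r_n$, and then use $\mathrm{Aut}(H_1(n))$ (whose linear part surjects onto the conformal symplectic group) together with inner automorphisms to carry $\Lambda$ onto $\Lambda_r$.

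Two small repairs are worth noting. First, your claim that $[\Lambda,\Lambda]$ must generate $\Lambda\cap Z$ is false in general: for $\Lambda_r$ one has $\Lambda_r\cap Z\cong\mathbb Z$ while $[\Lambda_r,\Lambda_r]\cong r_1\mathbb Z$, so the index is $r_1$. Nondegeneracy of $B$ over $\mathbb Q$ follows instead simply from nondegeneracy of the standard symplectic form on $\mathbb R^{2n}$ and the fact that $L$ is a full lattice. Second, and relatedly, in the uniqueness step the natural target of the commutator pairing is $Z(\Lambda)$ rather than $[\Lambda,\Lambda]$: with $Z(\Lambda)\cong\mathbb Z$ the elementary divisors are exactly $r_1\mid\cdots\mid r_n$, whereas if you identify $[\Lambda,\Lambda]\cong\mathbb Z$ you obtain $1\mid r_2/r_1\mid\cdots\mid r_n/r_1$ and must supplement this with the index $[Z(\Lambda):[\Lambda,\Lambda]]=r_1$ to recover $r$. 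With these adjustments your sketch goes through.
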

\noindent One can easily see that the  generic discrete uniform  subgroup of $H_1(2)$ corresponds to  those 
appearing in Fillmore Sheuneman's  paper, \cite[Theorem 4.1]{FS}.\\ 
\noindent Let now $\Gamma_0 \subset \Gamma$ subgroups of ${\it Aff}(2,\H)$ be as in Section 3. For the sake of simplicity, we will use the same names to indicate their isomorphic images in $N$.
\noindent A discrete uniform subgroup $\Gamma_0$ of $N_1$ must be isomorphic to one of the form $\Theta\times\Lambda_r$,  where $\Theta$ is a lattice in $\R^3;$ 
thus the corresponding group $\Gamma$ is the finite extension of $\Gamma_0$ via a  finite subgroup (see the list) of $\mathbb{S}^3.$  And this concludes the case of $N_1$.
\vskip .3cm

For the groups of type  $H_3(n)$ a class of discrete uniform subgroups is given by the lattices
 $\Lambda(1, n)\subseteq H_3(n)$ consisting
of matrices with integer entries; we denote an element of $\Lambda(1, n)$ by
$(a_1, b_1, . . . , a_n, b_n, c).$ For $m\in \mathbb{N}$, and  $m\geq  2$, let us take the sublattice
$\Delta(1,n; m) = \{(a_1, b_1, . . . , a_n, b_n, c)\in \Lambda(1, n)\; | \; a_1 \in m\Z\}$ is contained  in $\Lambda(1, n)$;
then $H_3(n)/\Delta(1, n; m)$ is again a nilmanifold and there is an $m : 1$ covering $H_3(n)/\Delta(1, n; m)\rightarrow N(1, n)=
H_3(n)/\Lambda(1,n)$ with deck transformation group
$\Lambda(1,n)/\Delta(1, n; m)= \Z_m$.

As an application, we are able to exhibit other classes of examples of affine quaternionic surfaces when of $N=N_3= \R \times H_3 (1)$, where, explicitly,
$$
H_3(1)=\{\left(\begin{matrix} 1 & a+ib+jc+kd & i\alpha+j\beta+k\gamma\\0 & 1 & -a+ib+jc+kd\\0 & 0 & 1
\end{matrix}\right):a,b,c,d,\alpha,\beta,\gamma\in \R\}
$$
When we take  $\Lambda(1,3)$ as a discrete, uniform subgroup of $H_3(1)$, 
$$
\Lambda(1,3)=\{\left(\begin{matrix} 1 & a_1+ib_1+ja_2+kb_2 & ia_3+jb_3+kc\\0 & 1 & -a_1+ib_1+ja_2+kb_2\\0 & 0 & 1
\end{matrix}\right):a_i,b_i,c\in \Z\}
$$
we obtain that $\Gamma_0=\Z \times \Lambda(1,3)$ and, again, $\Gamma$ will be a finite extension of $\Gamma_0$ via a  finite subgroup (see the list) of $\mathbb{S}^3.$ A second class of examples can be constructed by taking  $\Gamma_0=(\Z\times \Delta(1, 3; m))\subset (\Z \times \Lambda(1,3))$ (with $m\in\mathbb{N}$).
	\end{example}

\end{document}